\documentclass[11pt]{amsart}
\usepackage{mathrsfs}
\usepackage{amsfonts}
\usepackage{latexsym,amsmath,amssymb}

\usepackage{xcolor} 
\usepackage{ulem}   
\usepackage{soul}   
\usepackage{array,tabularx}


\textwidth 6.5 true in
\oddsidemargin 0.15 true in
\evensidemargin 0.15 true in

\setcounter{section}{0}

\pagestyle{myheadings} \footskip=50pt

\renewcommand{\epsilon}{\varepsilon}

\newtheorem{theorem}{Theorem}[section]

\newtheorem{lemma}[theorem]{Lemma}
 \newtheorem{prop}[theorem]{Proposition}

\newtheorem{deff}[theorem]{Definition}

\newcommand{\bth}{\begin{theorem}}
\newcommand{\ble}{\begin{lemma}}
\newcommand{\bcor}{\begin{corr}}
\newcommand{\bdeff}{\begin{deff}}
	\newcommand{\bprop}{\begin{proposition}}
\newcommand{\ele}{\end{lemma}}
\newcommand{\ecor}{\end{corr}}
	\newcommand{\edeff}{\end{deff}}
	
	\newcommand{\eprop}{\end{proposition}}

\newcommand{\la}{\lambda}

\renewcommand{\Pi}{\varPi}

\renewcommand{\epsilon}{\varepsilon}

\newcommand{\parital}{\partial}

\newcommand{\p}{\partial}

\numberwithin{equation}{section}

\pagestyle{plain}

\thanks{The first author is supported  by NSFC Grant, No.12101145 and  Guangxi Science, Technology Project, Grant No. GuikeAD22035202. The second and last authors are supported by NSFC Grant No. 12431008. The third author is supported by NSFC Grant No.12141105.}

	\title
[ Hessian curvature flow with prescribed Gauss image ]{Second boundary value problem for the Hessian curvature flow}
\author{Rongli Huang}
\address{School of Mathematics and Statistics, Guangxi Normal University, Guilin, China}
\email{ronglihuangmath@gxnu.edu.cn}

\author{Changzheng Qu}
\address{School of Mathematics and Statistics, Ningbo University, Ningbo, China}
\curraddr{}
\email{quchangzheng@nbu.edu.cn}
\thanks{}

\author{ZhiZhang Wang}
\address{School of Mathematical Science, Fudan University, Shanghai, China}
\email{zzwang@fudan.edu.cn}

\author{Weifeng Wo}
\address{School of Mathematics and Statistics, Ningbo University, Ningbo, China}
\curraddr{}
\email{woweifeng@nbu.edu.cn}
\thanks{}

\begin{document}
\maketitle

\begin{abstract}
We investigate the evolution of strictly convex hypersurfaces driven by the $k$-Hessian curvature flow, subject to the second boundary condition.  We first explore the translating solutions corresponding to this boundary value problem. Next, we establish the long-time existence of the flow and prove that it converges to a translating solution. To overcome the difficulty of driving boundary $C^2$ estimates, we employ an orthogonal invariance technique. Using this method, we extend the results of  Schn\"urer-Smoczyk \cite{Schnurer2003} and Schn\"urer \cite{Schnurer2002} from the second boundary value problem of Gauss curvature flow to $k$-Hessian curvature flow.
\end{abstract}

\section{Introduction}
The evolution of convex hypersurfaces under curvature flows is a fundamental topic in differential geometry, with significant progress made over the past few decades. Prominent examples include flows driven by the mean curvature \cite{Angenent2019,EckerHuisken1991, Huisken1984} and the Gauss curvature \cite{Andrews2016, Andrews1999, Brendle2017, Firey, Tso1985}, which have yielded profound insights into the evolution and asymptotic behavior of hypersurfaces. For more results on curvature flows, see \cite{Andrews2020, Angenent2020, Choi2022, EckerHuisken1991} and references therein. However, the analysis becomes intricate when boundary conditions are involved. Various boundary value problems for curvature flows, including Dirichlet and Neumann conditions, have been explored \cite{Alschuler1994, Huisken1989, Schnurer20022}. Second boundary condition, corresponding to prescribed gradient images, naturally arises in optimal transport \cite{Chen2016, Chen2021, Kitagawa2012} and has been studied in relation to Hessian equations \cite{Jiang2018, Trudinger, Urbas1997, Urbas2001} as well as curvature equations \cite{Brendle2010, Urbas2002}. Nonlinear parabolic equations with second boundary conditions have also been addressed \cite{Huang2015, Wang2023}.

Schn\"urer and Smoczyk \cite{Schnurer2003} studied the second boundary value problem for Hessian flow equation:
\begin{equation}\label{SSflow}
        u_t=\log F(D^2u)-\log g(x,u,Du), \quad \text{in}\ \Omega\times[0,T),\\
\end{equation}
subject to second boundary condition $  Du(\Omega)=\Omega^*$ and initial data $  u|_{t=0}=u_0$. Here $F$ is a Hessian function of a special class, and $g(x,u,Du)$ is a positive function with $g_z>0$. It yields the logarithmic Gauss curvature flow:
\[u_t=\log \det u_{ij}-\frac{n+2}{2}\log(1+|Du|^2)-\log g_1(x,u).\]
which describes the evolution of a hypersurface as graph $u$, with the "vertical" velocity equals to the difference of the logarithms of the actual Gauss curvature for $u$ and the prescribed Gauss curvature $g_1(x,u)$.
  They established the existence of smooth, strictly convex solutions that converge to a solution $u_{\infty}$ of the elliptic second boundary value problem
\begin{equation}\label{limite}
    \begin{cases}
        F(D^2u_{\infty})=g(x,u_{\infty},Du_{\infty}), \quad \text{in} \ \ \Omega,\\
        Du(\Omega)=\Omega^*.
    \end{cases}
\end{equation}
Here,  \eqref{limite} includes the prescribed Gauss curvature equation:
\[\frac{\det u_{ij}}{(1+|Du|^2)^{\frac{n+2}{2}}} =g_1(x,u).\] 
Schn\"urer \cite{Schnurer2002} further considered the second boundary value problem for curvature flow:
\begin{equation}\label{Sflow}
    \begin{cases}
        X_t=-(\log F-\log f)\nu \quad &\text{in}\ \Omega\times[0,T),\\
      \nu(M(t))=\nu(M(0)),&\\
        X|_{t=0}=X_0, \quad  &\text{in}\ \Omega,
    \end{cases}
\end{equation}
where $X=(x,u(x,t))$ is a family of convex graphic hypersurfaces defined in bounded domain $\Omega$, and $M(t)=\text{graph}(-u)|_\Omega$. Since the unit normal $\nu$  acts as the Gauss map, $\nu(M(t))=\nu(M(0))$ is equivalent to a second boundary value condition $ Du(\Omega)=Du_0(\Omega)=\Omega^*$. Schn\"urer proved that the solution converge to translating solutions. In this flow, the hypersurface $X$ evolves such that its normal velocity is determined by the logarithmic difference of a curvature function $F$, from a special class, and a positive function $f$, where $F$ includes, in particular, the Gauss curvature. In related work, Huang \cite{Huang2015} investigated the second boundary problem for Lagrangian mean curvature flow, establishing flow convergence.

This paper aims to investigate the second boundary value problem for a more general curvature flow: the $k$-Hessian curvature flow, extending the results of \cite{Schnurer2002,Schnurer2003} to a broader class of curvature flows.  While the entire Hessian curvature flows in Minkowski space have been studied in \cite{WZ1,WZ2,WZ3}, these flows are not under second boundary conditions.
We consider the second boundary value problem for the evolution of strictly convex hypersurfaces  \(M_u(x,t)\) with normal velocity given by the $k$ Hessian curvature 
\begin{equation}\label{curvatureeq}
    \frac{\partial X}{\partial t} = \sigma_k^{\frac{1}{k}}(\kappa[M_u]) N, \quad  \text{in} \ \Omega,
\end{equation}
where $1 \leq k \leq n$,  \(N\) is the upward unit normal vector of $X(x,t)$. 
The convex graphic hypersurfaces \(M_u(x,t)\) in \(\mathbb{R}^{n+1}\) is described by  the embeddings $X:\Omega\times [0,T)\rightarrow \mathbb{R}^{n+1}$ as
\[
M_u(x,t) = \left\{ X(x,t) = (x, u(x, t)): \, (x,t) \in \Omega \times [0,T) \right\}.
\]
Here, $\Omega \subset \mathbb{R}^n$ is a strictly convex bounded domain with smooth boundary $\partial \Omega$ and $\kappa[M_u]=(\kappa_{1},\kappa_{2},\cdots,\kappa_n)$ denotes the principal curvatures of $M_u$. The Hessian curvature $\sigma_{k}$ is defined by the $k$-th  elementary symmetric polynomial in the principal curvatures:
\begin{equation*}
	\sigma_{k}(\kappa)=\sum_{1\leq i_{1}< i_{2}<\cdots< i_{k}\leq n}\kappa_{i_{1}}\kappa_{i_{2}}\cdots\kappa_{i_{k}}.
\end{equation*}

Thus, the graph function $u(x,t)$ of convex hypersurface $X(x,t)$, evolving by \eqref{curvatureeq}, satisfies the following evolution equation:
\begin{equation}\label{ueq}
\dot{u} = \sqrt{1 + |Du|^2} \sigma_k ^{\frac{1}{k}}(\kappa[M_u]),
\end{equation}
where \(\dot{u}=u_t\) denotes the time derivative of \(u\)  and \(Du\) represents the gradient with respect to the spatial variable $x$.

We consider the evolution starting from the initial hypersurface $M_u(x,0)$, given by
\[X(x,0)=X_0(x),
\]
and subject to the  second boundary condition
\begin{equation}\label{bc}
 Du(\Omega)=\Omega^*.
\end{equation}
Here $\Omega$ and $\Omega^*$ are smooth unifromly convex domains in $\mathbb{R}^n$. 

Given  $\Omega \subset \mathbb{R}^n$ and $X_0$, we study the following curvature flow problem:
\begin{equation}\label{Xflow}
	\left\{
	\begin{aligned}
		&X_t= \sigma_k^{\frac{1}{k}}(\kappa[M_u])N,  &\text{in } \Omega \times [0, T), \\
		&	 Du(\Omega)=\Omega^*,& \\
		&X(x,0)=X_0(x),  &\text{in } \Omega.
	\end{aligned}
	\right.
\end{equation}
The curvature flow problem \eqref{Xflow} can be reformulated as the following second boundary value problem for the graph function $u(x,t)$:
\begin{equation}\label{uflow}
	\left\{
	\begin{aligned}
		&	u_t = \sqrt{1 + |Du|^2} \sigma_k^{\frac{1}{k}} (\kappa[M_u]),  &\text{in } \Omega \times [0, T), \\
	&	Du (\Omega) = Du_0(\Omega)=\Omega^*,& \\
	&	u(x,0) = u_0(x),  &\text{in } \Omega.
	\end{aligned}
	\right.
\end{equation}

Our goal is to prove the existence of a smooth, strictly convex solution $u(x,t)$ for all $t \in [0, \infty)$. Moreover, as $t \to \infty$, the solution converges to a translating solution $u(x,t)=u_{\infty}(x)+at$ of \eqref{curvatureeq}, where $u(x)$ satisfies
\begin{equation}\label{limiteq}
	\left\{
	\begin{aligned}
		&\sigma_k^{\frac{1}{k}} (\kappa[M_{u_{\infty}}])=\frac{a}{\sqrt{1 + |Du_{\infty}|^2}},  &\text{in } \Omega,\\
		&	Du (\Omega) = \Omega^*.&
	\end{aligned}
	\right.
\end{equation}

We summarize our main result in the following theorem:
\begin{theorem}\label{t1.1}
	Let $\Omega$ and $\Omega^*$ be bounded strictly convex domains with smooth boundaries in $\mathbb{R}^n$. Then, there exists a unique smooth, strictly convex graphical solution $u(x,t)$ to \eqref{uflow} for all $t\geq 0$. Moreover,  as $t \to \infty$, $u$ converges to a translating solution with a specified velocity.
\end{theorem}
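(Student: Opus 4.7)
The plan is to follow the classical scheme for parabolic second boundary value problems: first establish short-time existence, then derive uniform a priori estimates up to the boundary, continue the solution globally in time by standard parabolic continuation, and finally analyze the long-time asymptotics. The second boundary condition $Du(\Omega)=\Omega^*$ can be reformulated as an oblique boundary condition $h^*(Du)=0$ on $\partial\Omega$, where $h^*$ is a defining function of $\partial\Omega^*$; the obliqueness $h^*_p\cdot\nu_\Omega>0$ follows from strict convexity of both domains via a Gauss-map argument. Under this reformulation, \eqref{uflow} becomes a fully nonlinear uniformly parabolic equation on the open set of strictly convex graphs, and short-time existence of a smooth strictly convex solution follows from linearization plus the inverse function theorem in parabolic H\"older spaces.

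For the a priori estimates, the $C^1$ bound $|Du|\le\operatorname{diam}(\Omega^*)$ is automatic because $Du(\cdot,t)\subset\overline{\Omega^*}$. Differentiating \eqref{uflow} in $t$ yields a linear parabolic equation for $\dot u$ with a time-independent oblique boundary condition, so the maximum principle delivers uniform two-sided bounds on $\dot u$, which together with the $C^1$ bound controls the oscillation of $u$. The main difficulty is the $C^2$ estimate up to the boundary. Interior $C^2$ bounds follow from the usual maximum principle applied to the largest eigenvalue of $D^2u$, exploiting concavity of $\sigma_k^{1/k}$ on the relevant convex cone. Pure tangential second derivatives at a boundary point are controlled by differentiating the oblique boundary condition twice along $\partial\Omega$. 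The hard step is the mixed tangential-normal second derivatives, where I plan to exploit the \emph{orthogonal invariance} of both $\sigma_k^{1/k}(\kappa[M_u])$ and $\sqrt{1+|Du|^2}$: by rotating coordinates at each boundary point so that a chosen tangential direction is aligned with the conormal of $\partial\Omega^*$ at the image of that point under the Gauss map, one can construct an auxiliary barrier whose Hopf-lemma analysis yields the desired mixed estimate. The double-normal second derivative is then recovered algebraically from the equation itself, once $\sigma_k^{1/k}$ is bounded below on the boundary by the $\dot u$ bound combined with a strict-convexity barrier.

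Once uniform $C^2$ bounds with uniform strict convexity are established, the operator becomes uniformly parabolic with concave nonlinearity, and the Evans-Krylov theorem together with its boundary variant and Schauder estimates for oblique problems yield $C^{2,\alpha}$ regularity up to the boundary; a standard bootstrap produces uniform $C^{k,\alpha}$ bounds for all $k$, extending the solution to all $t\ge 0$ and giving uniqueness. For the long-time behavior, the maximum principle applied to $\dot u$ shows that $\max_\Omega\dot u-\min_\Omega\dot u$ is non-increasing in $t$, and a Harnack-type argument for the oblique linear parabolic equation satisfied by $\dot u$ forces this oscillation to tend to zero; hence $\dot u\to a$ uniformly for some constant $a$. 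Smooth subsequential limits of $u(\cdot,t)-at$ then solve \eqref{limiteq} with velocity $a$, and uniqueness of solutions to \eqref{limiteq} up to additive constants (itself a consequence of the strong maximum principle for the linearization with oblique boundary data) promotes subsequential convergence to full convergence. The mixed tangential-normal boundary $C^2$ estimate is the step I expect to absorb most of the paper's technical work, since the scalar Hessian arguments of \cite{Schnurer2003} do not transfer verbatim to the geometric curvature quantity $\sigma_k^{1/k}(\kappa[M_u])$ under prescribed image boundary conditions; this is precisely where the orthogonal invariance technique is essential.
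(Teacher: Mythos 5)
Your overall scheme (short-time existence, uniform estimates, continuation, convergence via the oscillation of a solution of a linear oblique parabolic problem) matches the paper's, and your $C^0$, $C^1$ and $\dot u$ bounds are obtained in essentially the same way (the paper runs the $\dot u$ estimate on the Legendre dual, where $-\dot u^* = w^*(F^*)^{-1}$ satisfies a parabolic equation and a Hopf-lemma argument rules out boundary extrema). However, your decomposition of the boundary $C^2$ estimate is inverted relative to the paper's, and this hides a genuine gap. For a second boundary value problem one decomposes directions at $\partial\Omega^*$ into tangential $\tau$ and the oblique direction $\beta=(h^*_{p_1},\dots,h^*_{p_n})$. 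The mixed derivative $u^*_{\tau\beta}$ is then identically zero (differentiate $h^*(Du^*)=0$ once tangentially) -- it is the trivial case, not the hard one. The double oblique derivative $u^*_{\beta\beta}$ is bounded by a barrier argument for $H^*=h^*(Du^*)$, yielding $u^*_{\beta\beta}\le C(\epsilon)+\epsilon M$; it is not ``recovered algebraically from the equation,'' and for $\sigma_k$ with $k<n$ that algebraic route is not available in the clean Monge--Amp\`ere form you have in mind. The genuinely hard case is the \emph{double tangential} derivative $u^*_{\tau\tau}$, and your claim that pure tangential second derivatives ``are controlled by differentiating the oblique boundary condition twice along $\partial\Omega$'' is circular as stated: that differentiation produces the identity
\begin{equation*}
D_{\xi\xi\beta}u^* + h^*_{p_kp_l}D_{\xi k}u^*\,D_{\xi l}u^* + II(\xi,\xi)\,D_{\nu^*\beta}u^* = 0,
\end{equation*}
which, via concavity of $h^*$, bounds $(D_{\xi\xi}u^*)^2$ only \emph{after} one has bounded the third-order term $D_{\xi\xi\beta}u^*$. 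Producing that third-order bound is where all the work lies and where the orthogonal invariance enters: one rotates the hypersurface by a family $A_s\subset O(n+1)$, which generates special tangential vector fields $\mathcal T$ with $|\mathcal T(y)|^2\le 1+|y|^2$ (equality at the base point), differentiates the support-function/dual equation twice along $\mathcal T$ using concavity of $F^*$, and runs a maximum principle for an auxiliary function built from $D_{\mathcal T\mathcal T}u^*/\tilde M$ plus correction and barrier terms, closing the loop through the small-$\epsilon$ bookkeeping $M\le C(C(\epsilon)+\epsilon M+\tilde M)$ and $\tilde M\le C(C(\epsilon)+\epsilon M)$.

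Two further points. First, the paper performs the global $C^2$ estimate not on the largest eigenvalue of $D^2u$ directly but on the spherical Hessian $\Lambda_{ij}=\nabla_{ij}v+v\delta_{ij}$ of the support function, where the dual operator $(F^*)^{-1}$ with $F^*=(\sigma_n/\sigma_{n-k})^{1/k}$ has the right concavity and the gradient terms that would obstruct your direct approach disappear; the conclusion is the reduction of $\sup|D^2u^*|$ to the boundary, as you intend. Second, for the convergence the paper does not need a Harnack argument: it already has a translating solution $u^\infty$ from the elliptic existence theorem of \cite{HQWW} (which also fixes the velocity $a$), so $W=u-u^\infty$ solves a linear oblique parabolic problem and the strong maximum principle forces its oscillation to zero. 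Your subsequential-limit route can be made to work but must separately identify the velocity $a$ and prove uniqueness of the translator, which the paper gets for free from \cite{HQWW}.
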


The $k$-Hessian curvature flow \eqref{curvatureeq} in Theorem \ref{t1.1}, including the mean curvature flow ($k=1$) and the Gauss curvature flow ($k=n$),  generalizes the results of Schn\"urer's flow\eqref{Sflow} to the broader class of $k$-Hessian curvatures $1\leq k\leq n$.  In \cite{HQWW}, we studied $k$-Hessian curvature hypersurfaces with prescribed Gauss images, using a  boundary $C^2$ estimate technique based on orthogonal invariance to extend Urbas’ work \cite{Urbas2002} from the Gauss curvature equation to the $k$-Hessian curvature equation. In this paper, we apply an orthogonal invariance method to obtain  boundary $C^2$ estimates for the flow. A key technique, using the maximum principle on the initial surface, enables the derivation of a double tangential boundary derivative estimate.  By adapting these boundary estimation techniques, we extend the study of $k$-Hessian curvature equation \cite{HQWW} to the corresponding $k$ Hessian curvature flow.

The paper is organized as follows. In Section 2, we describe some geometric notations, define the support function and the Legendre transformation, derive the corresponding equations, and state other relevant concepts and lemmas that will be used in the subsequent sections, respectively. In Section, we discuss the translating solution for the flow. Section 4 and 5 provide the $C^0$ and $w^*(F^*)^{-1}$ estimates. Section 6 addresses the strict obliqueness of the boundary condition. In Sections 7, we establish the $C^2$ estimates, including global and boundary $C^2$ estimates, where a special tangential direction is constructed in double tangential estimate. Finally, in Section 8, we complete the proof of the main theorem, establishing the long-time existence and convergence of the flow.

\section{Preliminaries}
\subsection{Geometric notations} 
We begin by introducing the notations used in this paper, which mainly follows the conventions of \cite{HQWW} and \cite{WZ2}. Unless otherwise stated, we employ the Einstein summation convention with repeated Latin indices summed from $1$ to $n$. The letter $C$ denotes a positive constant that may vary in different contexts but remains independent of the essential parameters in our analysis.
We use both $u_t$ and $\dot{u}$ to represent the time derivative of $u$, i.e. $$u_t = \dot{u} = \frac{\partial u}{\partial t}.$$
We denote the spatial derivatives of $u$ as
 $$u_i=D_{i}u=\dfrac{\partial u}{\partial x_{i}},\ u_{ij}=D_{ij}u=\dfrac{\partial^{2}u}{\partial x_{i}\partial x_{j}},\ |Du|=\sqrt{\sum_{i=1}^{n}|D_{i}u|^{2}}. $$
The upward-pointing unit normal vector to \(X\) is given by
\[
N(X) = \frac{(-Du, 1)}{\sqrt{1 + |Du|^2}}.
\]
In the standard coordinates of \(\mathbb{R}^{n+1}\), the induced metric $g_{ij}$ and its inverse  \(g^{ij}\)  on \(M_u\) are
\[
g_{ij} = \delta_{ij} + D_i u D_j u
\]
and
\[
g^{ij} = \delta_{ij} - \frac{D_i u D_j u}{1 + |Du|^2}.
\]
where $1\leq i,j\leq n$.
The second fundamental form $h_{ij}$ of \(M_u\) is
\[
h_{ij} = \frac{D_{ij} u}{\sqrt{1 + |Du|^2}}.
\]

The principal curvatures $\kappa$ of \(M_u\) are the eigenvalues of the Weingarten matrix \((h^j_i)=( h_{ik} g^{kj})\). According to \cite{Caffarelli1986}, these eigenvalues are the eigenvalues of the symmetric matrix
\[
A=(a_{ij}) = \left(\frac{1}{w} b^{ik}(D_{kl} u) b^{lj}\right),
\]
where \(w = \sqrt{1 + |Du|^2}\) and \[b^{ij} = \delta_{ij} - \frac{D_i u D_j u}{w(1 + w)}\] is the square root of \(g^{ij}\). Let \(b_{ij}\) be the inverse of \(b^{ij}\), given by
\[
b_{ij} = \delta_{ij} + \frac{D_i u D_j u}{1 + w}.
\]
Let \(\mathcal{S}\) denote the space of symmetric \(n \times n\) matrices, and define
\[
\mathcal{S}_+ = \{ A \in \mathcal{S} : \lambda(A) \in \Gamma_n \},
\]
where \(\Gamma_n = \{\lambda \in \mathbb{R}^n : \lambda_i > 0 \text{ for all } i\}\) is the convex cone, and \(\lambda(A) = (\lambda_1, \dots, \lambda_n)\) represents the eigenvalues of \(A\). Define the function \(F\) by
\[
F(A) = F\left( \frac{1}{w} b^{ik} (D_{kl} u) b^{lj} \right)= \sigma_k^{\frac{1}{k}}(\lambda(A)), \quad A \in \mathcal{S}_+.
\]
Note that the function \(F\) is well-defined for strictly convex hypersurfaces.
Set
\[
F^{ij}(A)= \frac{\partial F}{\partial a_{ij}}(A), \quad F^{ij, kl} = \frac{\partial^2 F}{\partial a_{ij} \partial a_{kl}}.
\]
Thus, using above notations, evolution \(\eqref{ueq}\) can be rewritten as
\begin{equation}\label{ueq2}
u_t= \sqrt{1 + |Du|^2} F\left( \frac{1}{w} b^{ik} (D_{kl} u) b^{lj} \right)=wF. 	
\end{equation}

\subsection{Support function}
The support function \( v \) for the convex hypersurface \(X
\) is defined as
\[v:=-\langle X, N\rangle=\frac{1}{\sqrt{1+|Du|^2}}\left(\sum_ix_i\frac{\partial u}{\partial x_i}-u\right).\]
where $\langle \cdot, \cdot\rangle$ denotes the standard Euclidean inner product.

Let $\{\textbf{e}_1, \cdots, \textbf{e}_n\}$ be an orthonormal frame on $\mathbb{S}^n$ and let $\nabla$ be  the standard Levi-Civita connection of $\mathbb{S}^n$.
Then the spherical Hessian of $v$ is
\begin{equation}\label{sphere}
	\Lambda_{ij}=\nabla_{ij}v+v\delta_{ij},
\end{equation}where $\nabla_iv, \nabla_{ij}v$ are the first and second covariant derivatives with respect to the standard spherical  metric.
From convex geometry theory, we  have the following
$$\nabla_iv=-\langle X, \textbf{e}_i\rangle, \ \ X=-\sum_i(\nabla_iv)\textbf{e}_i-vN.$$
Moreover, we have
\begin{eqnarray*}
	g_{ij}=\sum_k\Lambda_{ik}\Lambda_{kj},\ \ h_{ij}=\Lambda_{ij}.
\end{eqnarray*}
This implies that the eigenvalues of the spherical Hessian are the curvature radii of $M_u$. Specifically,
if the principal curvatures of $M_u$ are $(\la_1, \cdots, \la_n),$ then the eigenvalues of the spherical Hessian
are $\left(\la_1^{-1}, \cdots, \la_n^{-1}\right).$
Therefore, flow equation \eqref{curvatureeq}  can be written as
\begin{equation}
	v_t=-\frac{1}{F^*(\Lambda)},\quad x\in \mathbb{S}^n_+,
\end{equation}
where $F^*(\Lambda)= \left( \frac{\sigma_n}{\sigma_{n-k}}(\lambda(\Lambda)) \right)^{\frac{1}{k}}$.

\subsection{Legendre transform}
The Legendre transform of $u$, denoted by $u^*:\Omega^*\times [0,T)\rightarrow \mathbb{R}$, is defined by
\[ u^*=\sum_{i=1}^n x_iy_i-u, \ \ y_i=\frac{\p u}{\p x_i}.\]
From the definition, we obtain the following properties:
\[\dot{u}^*=-\dot{u}, \quad\frac{\partial u^*}{\partial y_i}=x_i,\quad\frac{\partial^2u^*}{\partial y_i y_j}=u^{ij},\]
where $u^{ij}$ is the inverse of Hessian $u_{ij}$ and $y_i$ is considered time-independent.
From the theory of convex bodies and boundary condition \eqref{bc}, we get
\begin{equation}\label{boundary2}
	Du^* (\Omega^*) = \Omega.
\end{equation}
 It is also known that
$$\left(\frac{\p^2 u}{\p x_i\p x_j}\right)=\left(\frac{\p^2  u^*}{\p y_i\p y_j}\right)^{-1}.$$
Using the coordinate $(y_1,y_2,\cdots,y_n)$, the first and the second fundamental forms can be rewritten as:
$$g_{ij}=\delta_{ij}+y_iy_j, \ \ h_{ij}=\frac{u^{* ij}}{\sqrt{1+|y|^2}},$$
where $\left(u^{* ij}\right)$ denotes the inverse matrix of $(u^*_{ij})$ and $|y|^2=\sum_iy_i^2$. Let $W$ denote the Weingarten matrix of $M_u,$ then we have
$$(W^{-1})_{ij}=\sqrt{1+|y|^2}\sum_kg_{ik}u^*_{kj}.$$

Then, we obtain the evolution equation for $u^*$:
\[
(u^*)_t = -\sqrt{1 +|Du|^2} \frac{1}{F^* (\lambda^*)}= -w^* (F^*)^{-1},
\]
where $w^*=\sqrt{1+y^2}$, and
\begin{equation}\label{Legendre}
F^* (\lambda^*) =	F^*\Big(w^* \sum_{k,l}b^*_{ik} (D_{kl}u^*) b^*_{lj}\Big)= \left( \frac{\sigma_n}{\sigma_{n-k}} (\lambda^*) \right)^{\frac{1}{k}},
\end{equation}
where $\lambda_i^*=1/\kappa_i$.
Thus, we derive the following dual evolution problem for $u^*$:
\begin{equation}\label{dualeq}
	\begin{cases}
		(u^*)_t = -w^* (F^*)^{-1}, \  \text{in } \Omega^* \times [0, T), \\
		Du^* (\Omega^*) = \Omega, \\
		u^*(y,0)= u^*_0(y),\ \  \quad\text{in}\  \Omega^*,\ \
	\end{cases}
\end{equation}
where \(u^*_0 = x \cdot Du_0 - u_0\).

\subsection{Second boundary conditions}
For convenience of boundary estimate, we introduce a \textbf{defining function }$h$ for convex domain $\Omega^*$  to reformulate the boundary condition $Du(\Omega)=\Omega^*$ to
 \begin{equation}\label{bdh}
	h(Du)=0 \ \ \text{on}\ \ \partial \Omega.
\end{equation}
Here $h : \mathbb{R}^n \to \mathbb{R}$ is a smooth, strictly concave function such that
\[\Omega=\{p\in \mathbb{R}^n:\ h(p)>0\}\ \text{and}\ |Dh|\ne 0 \ \text{on} \ \partial \Omega^*.\]
Without loss of generality, we may assume
$|Dh|=1 \ \ \text{on}\  \partial \Omega^*.$ Note that $h(Du)$ is positive in $\Omega$ and zero on $\partial \Omega$.

 Similarly, we define the defining function $h^*$ for $\Omega$  and have
  \begin{equation}\label{bdh2}
 	h^*(Du^*)=0 \ \ \text{on}\ \ \partial \Omega^*.
 \end{equation}
 \subsection{Previous results}
 We recall some basic results and lemmas from previous work \cite{HQWW} that will be used in our $C^2$ estimates.
 \begin{lemma}\label{lemma1}{(Lemma 5.1 of \cite{HQWW})} 	
 Let $(y_1,\cdots,y_n)$ be the rectangular coordinate of $\mathbb{R}^n$. Suppose $\nabla$ is the standard Levi-Civita connection on the unit sphere. Let
 $
 	e_i = \sum_kw^* \, b_{ik}^* \partial/\partial y_k.
 $
 	Then $\{e_1,\cdots,e_n\}$ is an orthonormal frame on $\mathbb{S}^n$. Moreover, for any function $v$ defined on some subset of $\mathbb{R}^n$, we get
 	\begin{eqnarray}\label{form}
 		\sum_{k,l}w^* \, b_{ik}^* D_{kl}v \, b_{lj}^*=\nabla^2_{ij}\frac{v}{w^*}+\frac{v}{w^*}\delta_{ij}.
 	\end{eqnarray}
 \end{lemma}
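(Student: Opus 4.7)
The plan is to establish both assertions by making the Gauss-image parameterization $\Phi(y) := (y,1)/w^* \in \mathbb{S}^n$ explicit and then invoking the standard identity relating the Euclidean Hessian of a $1$-homogeneous extension to the spherical Hessian. Tangent vectors $\partial/\partial y_k$ on $\Omega^*$ are lifted to $T_{\Phi(y)}\mathbb{S}^n$ via $d\Phi$, so each $e_i = w^* b^*_{ik}\,\partial_k\Phi$ is a genuine tangent vector to the sphere. The whole argument rests on two algebraic identities for $b^*_{ij} = \delta_{ij} + y_iy_j/(1+w^*)$: the square-root property $b^*_{ik} b^*_{kj} = \delta_{ij} + y_iy_j$ and the radial contraction $b^*_{ik} y_k = w^* y_i$, both of which follow by a one-line manipulation using $|y|^2 = (w^*)^2 - 1$.

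For the orthonormality part, I would first compute $\partial_k \Phi$ directly from $\Phi(y) = (y,1)/w^*$, then contract with $w^* b^*_{ik}$ to read off the components of $e_i$ as a vector in $\mathbb{R}^{n+1}$. The Euclidean inner product $e_i \cdot e_j$ expands into several terms that collapse to $\delta_{ij}$ thanks to the two identities above; by the same token $e_i \cdot \Phi(y) = 0$. This shows that $\{e_1,\ldots,e_n\}$ is an orthonormal frame of $T\mathbb{S}^n$ along the image of $\Phi$.

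For the Hessian identity, introduce the $1$-homogeneous extension $\hat v(x, x_{n+1}) := x_{n+1}\, v(x/x_{n+1})$ on $\mathbb{R}^{n+1}$, whose restriction to $\mathbb{S}^n$ pulls back along $\Phi$ to $v/w^*$. Combining the Gauss formula $\bar\nabla_T S = \nabla_T S - \langle T, S\rangle N$ on the unit sphere with $\partial_N \hat v = \hat v|_{\mathbb{S}^n}$ (a consequence of $1$-homogeneity) yields the classical identity $D^2 \hat v(T, S) = \nabla^2 (v/w^*)(T, S) + (v/w^*)\,\langle T, S\rangle$ for tangent vectors $T, S \in T\mathbb{S}^n$. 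A direct computation of the ambient Hessian at the representative point $X = (y,1)$ gives block entries $v_{kl}$, $-v_{km}y_m$, and $v_{km}y_ky_m$; by $(-1)$-homogeneity one has $D^2 \hat v(\Phi(y)) = w^*\, D^2\hat v(y,1)$. The decisive algebraic shortcut is the observation that $\tilde e_i^k := e_i^k - e_i^{n+1}\, y_k = b^*_{ik}$, immediate from the radial identity, which collapses the full $(n+1)$-dimensional contraction to $D^2\hat v(\Phi(y))(e_i, e_j) = w^*\, b^*_{ik}\, v_{kl}\, b^*_{lj}$, exactly the claimed left-hand side.

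The only real obstacle is bookkeeping: one must carefully track all contractions of $y$, $b^*$, and $w^*$ and identify which terms cancel. Every nontrivial cancellation in the proof is, however, a direct consequence of one of the two structural identities $(b^*)^2 = I + yy^T$ and $b^* y = w^* y$, so once these are established the remainder of the argument proceeds essentially mechanically.
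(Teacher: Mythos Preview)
The paper does not actually prove this lemma; it is stated in Section~2.5 as a preliminary result quoted verbatim from \cite{HQWW} (Lemma~5.1 there), so there is no in-paper argument to compare against. Your proposal therefore has to be judged on its own merits.

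Your argument is correct. The two structural identities $b^*_{ik}b^*_{kj}=\delta_{ij}+y_iy_j$ and $b^*_{ik}y_k=w^*y_i$ are exactly what is needed, and both follow from $|y|^2=(w^*)^2-1$ as you say. The orthonormality computation goes through: with $e_i^k=b^*_{ik}-y_iy_k/w^*$ for $k\le n$ and $e_i^{n+1}=-y_i/w^*$, one checks $e_i\cdot\Phi(y)=0$ and $e_i\cdot e_j=\delta_{ij}$ by direct expansion and the two identities. For the Hessian part, the $1$-homogeneous extension $\hat v(x,x_{n+1})=x_{n+1}\,v(x/x_{n+1})$ restricts to $v/w^*$ along $\Phi$, and the Gauss formula on $\mathbb{S}^n$ together with Euler's relation $N\cdot D\hat v=\hat v$ gives $D^2\hat v(T,S)=\nabla^2(v/w^*)(T,S)+(v/w^*)\langle T,S\rangle$ for tangent $T,S$. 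The block Hessian of $\hat v$ at $(y,1)$ has entries $v_{kl}$, $-v_{km}y_m$, $v_{km}y_ky_m$, and the $(-1)$-homogeneity of $D^2\hat v$ supplies the factor $w^*$ when passing from $(y,1)$ to $\Phi(y)$. Your key observation $\tilde e_i^k=e_i^k-e_i^{n+1}y_k=b^*_{ik}$ then collapses the full contraction to $w^*\,b^*_{ik}v_{kl}b^*_{lj}$, which is precisely \eqref{form}. Every step checks out; the proof is complete and cleanly organized around the homogeneous-extension viewpoint.
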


 \begin{prop}\label{propT}{(Proposition 5.2 of \cite{HQWW})} 	
 	Given some boundary point $y_0\in\p\Omega^*$ and some tangential vector $\xi=(\xi_1,\cdots,\xi_n)$ of $\p\Omega^*$ at $y_0$. We can construct a vector field $\mathcal{T}=\sum_m\mathcal{T}_m\frac{\p}{\p y_m}$ near $y_0$. It satisfies
 	\begin{equation}\label{5.2}
 		\mathcal{T}_m(y_0)=\sqrt{1+|y_0|^2}\xi_m
 	\end{equation}
 	and
 	\begin{equation}\label{Tineq}
 		|\mathcal{T}(y)|^2=\sum_m\mathcal{T}_m^2(y)\leq 1+|y|^2 \text{ for any } y,
 	\end{equation}
 	where the equality of \eqref{Tineq} holds when $y=y_0$.
  \end{prop}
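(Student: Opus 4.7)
The plan is to give an explicit construction of $\mathcal{T}$ as a scalar rescaling of the constant vector $\xi$ and then verify the two required properties by direct substitution. The starting observation is that conditions \eqref{5.2} and \eqref{Tineq} are only jointly attainable if $|\xi|=1$: substituting \eqref{5.2} into $|\mathcal{T}(y_0)|^2$ yields $(1+|y_0|^2)|\xi|^2$, and the equality clause of \eqref{Tineq} at $y=y_0$ then forces $|\xi|^2 = 1$. I will therefore treat $\xi$ as the prescribed unit tangent vector to $\partial\Omega^*$ at $y_0$, extended as a constant vector field on $\R^n$.

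I will then define, for $y$ in a neighborhood of $y_0$,
\[
\mathcal{T}_m(y) := \sqrt{1+|y|^2}\,\xi_m, \qquad m=1,\dots,n,
\]
and set $\mathcal{T} = \sum_m \mathcal{T}_m\,\partial/\partial y_m$, which is manifestly smooth. Verification of \eqref{5.2} is immediate by evaluating at $y = y_0$. For \eqref{Tineq} the computation
\[
|\mathcal{T}(y)|^2 \;=\; \sum_m \bigl(\sqrt{1+|y|^2}\,\xi_m\bigr)^{2} \;=\; (1+|y|^2)\,|\xi|^2 \;=\; 1+|y|^2
\]
shows that the inequality in fact holds as an equality for every $y$, hence in particular at $y = y_0$, as required.

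There is no real analytic obstacle; the proof reduces to choosing the correct scalar weight. The conceptual point, and arguably the only nontrivial one, is recognizing that the factor $\sqrt{1+|y|^2} = w^*(y)$ is precisely the intrinsic weight linking Euclidean coordinates on $\Omega^*$ to the spherical geometry used in Lemma \ref{lemma1}, through the orthonormal frame $e_i = w^* b^*_{ik}\,\partial/\partial y_k$ on $\mathbb{S}^n$. Once this weight is identified, $\mathcal{T}$ is essentially forced. Note that tangentiality of $\xi$ to $\partial\Omega^*$ is not used in the construction itself; it is the hypothesis that makes $\mathcal{T}(y_0)$ geometrically meaningful for the subsequent boundary $C^2$ estimate, where this proposition is applied (to replace a directional derivative along the boundary by one along $\mathcal{T}$ at $y_0$ while controlling the ambient size of the test vector field by $1+|y|^2$).
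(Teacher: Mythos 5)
Your construction does verify the two displayed conditions: with $|\xi|=1$ (which, as you correctly observe, is forced by combining \eqref{5.2} with the equality case of \eqref{Tineq}), the field $\mathcal{T}_m(y)=\sqrt{1+|y|^2}\,\xi_m$ satisfies \eqref{5.2} and gives $|\mathcal{T}(y)|^2=1+|y|^2$ identically, so the inequality holds with equality at $y_0$. If the proposition were nothing more than its two displayed formulas, this would be a complete (and essentially trivial) proof.

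The gap is that this is not the vector field the proposition is actually about, and the stripped-down version you prove cannot be used where the proposition is invoked. In Section 7 the field $\mathcal{T}$ is introduced as $\mathcal{T}=\sum_m \frac{\partial g_m}{\partial s}\big|_{s=0}\frac{\partial}{\partial y_m}$, the infinitesimal generator of the family $\sigma_s(y)=PA_s^{-1}P^{-1}(y)$ of \eqref{sigmat}, where $A_s$ are rotations of $\mathbb{R}^{n+1}$ and $P$ is the projection $\mathbb{S}^n_+\to\mathbb{R}^n$. The whole point of the orthogonal-invariance technique is that differentiating the rotated solutions $u_s^*$ in $s$ produces the identities \eqref{newut}, \eqref{Td1} and \eqref{Td2}, which are the evolution equations for $w^*\mathcal{T}(u^*/w^*)$ and $w^*\mathcal{T}^2(u^*/w^*)$ needed in the double tangential estimate; these hold precisely because $\mathcal{T}$ is such a generator. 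The content of Proposition \ref{propT} (Proposition 5.2 of \cite{HQWW}) is that \emph{within this restricted class} of rotation generators one can still prescribe the value $\sqrt{1+|y_0|^2}\,\xi$ at $y_0$ while keeping the global bound $|\mathcal{T}(y)|^2\le 1+|y|^2$; the proof there chooses $A_s$ as rotations in the $2$-plane spanned by $P^{-1}(y_0)$ and the lifted tangent direction, and the bound \eqref{Tineq} is then a genuine computation using skew-symmetry of the generator, not an identity. Your field $\sqrt{1+|y|^2}\,\xi$ with constant $\xi$ is not of the form $dP(-B\,P^{-1}(y))$ for any skew-symmetric $B$ (the dependence on $y$ is wrong), so for it the identities \eqref{Td1}--\eqref{Td2} are unavailable and the subsequent estimate of $u^*_{\tau\tau}$ collapses. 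In short: you have proved a literal reading of the statement but missed the constraint that makes the statement nontrivial and useful.
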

\section{Translating Solutions}

In this section, we establish the existence of a translating solution to the \(k\)-Hessian curvature flow \eqref{uflow}. A translating solution \( u(x,t) \) takes the form
\[
u(x,t) = u(x) + at,
\]
where \( a \) is a constant. Substituting this into the flow equation \eqref{ueq}, we obtain the equation for $u(x)$:
\begin{equation}\label{teq}
	\frac{a}{\sqrt{1+|D u|^2}} = \sigma^{\frac{1}{k}}(\kappa(M_u)).
\end{equation}
 By Theorem 1.3 in \cite{HQWW}, we know that there exists a  unique solution \( u(x) \)  to
 \begin{equation}
 	\left\{
 	\begin{aligned}
 		&\sigma_k^{\frac{1}{k}}(M_u) = a N\cdot E,  &\text{in } \Omega\\
 		&	Du (\Omega) = \Omega^*,&
 	\end{aligned}
 	\right.
 \end{equation}
where \( a \) is a constant determined by  \( \Omega \) and \( \Omega^* \), $
N(X) = \frac{(-Du, 1)}{\sqrt{1 + |Du|^2}}
$, and $E=(0,\cdots,0,1)$.
This shows the existence of a solution to \eqref{teq} subject to the boundary condition, thus establishing the existence of translating solution to \eqref{limiteq}.

Applying the Legendre transform to the function \( u(x,t) \), we have
\[
u^*(y,t) = x(t) \cdot D u(x,t) - u(x,t) = u^*(y) - at.
\]
This leads to the equation for \( u^* \):
\[
\frac{a}{\sqrt{1+|y|^2}} = (F^*)^{-1}
\]
or equivalently,
\[
F^* \left( w^* b^*_{ik} D_{kl} u^* b^*_{lj} \right) = \frac{\sqrt{1 + |y|^2}}{a}.
\]
Thus, we also have the existence of translating solution to the dual problem.

\section{\(C^0\) Estimates}

In this section, we first derive an estimate for $\dot{u}$, then obtain the \(C^0\) estimate for \( u(x,t) \) and $u^*(y,t)$. Let $\nu$ and $\nu^*$ denote the inner unit normal of $\Omega$ and $\Omega^*$, respectively.

We first have the following inequality for the $\sigma_k$ and $\sigma_1$ curvatures:
\[
\sigma_k^{1/k}(\kappa) = F\left(\frac{1}{w} b^{ik} (D_{kl} u) b^{lj}\right) \leq C \sigma_1 \left(\frac{1}{w} b^{ik} (D_{kl} u) b^{lj}\right) + C.
\]
Integrating this inequality over \( \Omega \)  yields:
$$\int_{\Omega} F(a_{ij})dx\leq \int_{\Omega}D\left(\frac{Du}{w}\right)dx+C|\Omega|=C|\Omega|-\int_{\p\Omega}\frac{D_{\nu}u}{w}\leq C, $$
where $a_{ij} = \frac{1}{w} b^{ik} (D_{kl} u) b^{lj}$ and $\nu$ is the unit normal to $\Omega$. Here, the boundedness of $\Omega$ and $\Omega^*$
ensures that the integral is bounded:
\begin{equation}\label{Fbound}
\int_{\Omega} F(a_{ij}) \, dx \leq C.
\end{equation}

By the boundedness of $\Omega$ and $\Omega^*$, we can derive the following $C^1$-estimate:
\begin{lemma}\label{C1}
Let $\Omega$ and $\Omega^*$ be bounded domains. For a smooth convex solution $u(x,t)$ of \eqref{uflow}, the gradient of
$u$ satisfies:
\[|Du|\leq C, \ \text{for all}\ t\geq 0,\]
where $C$ is a constant depending on $\Omega$ and $\Omega^*$.

\end{lemma}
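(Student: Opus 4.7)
The plan is to extract the gradient bound directly from the second boundary condition together with strict convexity of the solution, rather than from any differential inequality. The key observation is that the condition $Du(\Omega)=\Omega^{*}$ in \eqref{uflow} is not merely a condition on $\partial\Omega$: because the map $x\mapsto Du(x,t)$ extends continuously to $\overline{\Omega}$ and $u(\cdot,t)$ is strictly convex, this map is injective from $\overline{\Omega}$ onto $\overline{\Omega^{*}}$. Hence every value $Du(x,t)$ with $x\in\Omega$ automatically lies in $\overline{\Omega^{*}}$.

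The steps I would carry out are: first, invoke the strict convexity of $u(\cdot,t)$ — which is assumed in the statement — to conclude that $Du(\cdot,t):\overline{\Omega}\to\overline{\Omega^{*}}$ is a well defined homeomorphism onto the closure of $\Omega^{*}$ (this is the standard interpretation of the second boundary condition, cf.\ the discussion preceding \eqref{boundary2} and the Legendre dual formulation). Second, for every $(x,t)\in\Omega\times[0,\infty)$, conclude $Du(x,t)\in\overline{\Omega^{*}}$. Third, since $\Omega^{*}$ is bounded, set $C:=\sup_{y\in\overline{\Omega^{*}}}|y|$, which is finite and depends only on $\Omega^{*}$ (trivially on $\Omega$ if one wishes to emphasize both domains in the statement), and conclude $|Du(x,t)|\le C$ uniformly in $t$.

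There is essentially no obstacle here: the estimate is pointwise and entirely static in $t$, requiring no use of the curvature equation \eqref{ueq2}, no maximum principle, and no integral identity such as \eqref{Fbound}. The only small subtlety is justifying that $Du(x,t)$ lies in $\overline{\Omega^{*}}$ for \emph{interior} points $x\in\Omega$, not just on $\partial\Omega$; this is precisely where strict convexity of $u(\cdot,t)$ is used, since it guarantees that $Du(\cdot,t)$ is a bijection between the closures and, in particular, that $Du(\Omega)\subseteq\Omega^{*}\subseteq\overline{\Omega^{*}}$. Once this is in place, the lemma follows immediately, and the constant $C$ can be taken to be the diameter of $\Omega^{*}$ (or the radius of any ball containing $\Omega^{*}$), which is independent of $t$ and of the initial data $u_{0}$ beyond the prescribed image $\Omega^{*}=Du_{0}(\Omega)$.
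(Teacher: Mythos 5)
Your proposal is correct and matches the paper's reasoning: the paper states the lemma as an immediate consequence of the second boundary condition $Du(\Omega)=\Omega^{*}$ together with the boundedness of $\Omega^{*}$, which is exactly your argument. The paper offers no further detail, so your observation that the bound is pointwise, static in $t$, and requires neither the evolution equation nor a maximum principle is precisely the intended (trivial) proof.
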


Integrating both sides of the graph equation $u_t =\sqrt{1 + |Du|^2}F(a_{ij})$ over $\Omega$, and 
using \eqref{Fbound} and the $C^1$ bound, we obtain:
\[
\int_{\Omega} u_t dx\leq C.
\]
Next, integrating this inequality with respect to time from \( 0 \) to \( t \), we have
\[
\int_{\Omega} u(x,t)dx - \int_{\Omega} u(x,0)dx = \int_0^t \int_{\Omega} u_t \, dx \, d\tau\leq C t.
\]
Thus, we conclude that:
\[
\int_{\Omega} u(x,T)\leq C T+C .
\]
Therefore, there exists a point \( \tilde{x} \in \Omega \) such that \( u(\tilde{x}, T) \leq C(\Omega, T) \). By $|Du|\leq C$, we get
\[
u(x,T) \leq C(\Omega,T).
\]

 Since \( u_t > 0 \),  it follows that:

\[
u(x, t) - u(x, 0) > 0.
\]
Thus, we have
\[
|u(x,T)| \leq C(\Omega, T).
\] Moreover, we can also derive the bound for \( u^*=x \cdot Du - u \):
\[
|u^*| \leq C(\Omega, \Omega^*, T).
\]
We have established \( C^0 \) estimate for  $u$ and \( u^* \).

\section{ Estimate of \( w^* (F^*)^{-1}\)}

From the dual flow equation \eqref{dualeq}, we deduce the evolution of \( (F^*)^{-1} \):
\begin{align*}
		\frac{d}{dt}(F^*)^{-1}&= - (F^*)^{-2} F^{*ij}
(w^* b^*_{ik} D_{kl} \frac{d}{dt}(u^*) b^*_{lj} )\\
	&= (F^*)^{-2} (F^*)^{ij}
	\cdot\left[ w^* b^*_{ik} D_{kl} (w^* (F^*)^{-1}) b^*_{lj} \right] ,
\end{align*}
where $w^*=\sqrt{1+|y|^2}$ and $b^*_{ij}=\delta_{ij}+\frac{y_iy_j}{1+w^*}$.

This gives the evolution equation for \( w^* (F^*)^{-1} \):
\[
\frac{d}{dt} \left( w^* (F^*)^{-1} \right) = w^* (F^*)^{-2} (F^*)^{ij} \left[ w^* b^*_{ik} D_{kl} (w^* (F^*)^{-1}) b_{ij}^* \right].
\]
Thus, $ w^* (F^*)^{-1}=-\dot{u}^*  $ satisfies a parabolic equation. Applying the maximum principle, we can conclude that $ w^* (F^*)^{-1}$ attains its maximum or minimum on the parabolic boundary. By the Hopf maximum principle, at the maximum or minimum point of \( \dot{u}^* \), we have
\[
\dot{u}^*_{\nu^*}  <0,\quad (\text{or}>0) ,
\]
unless $\dot{u}^*$ is constant. 

On the other hand, by the boundary condition $h^*(Du^*) = 0$, we get
\begin{equation}
	h^*_{p_k} \dot{u}^*_k = 0\quad \text{on}\quad \partial\Omega.
\end{equation}
According to  Lemma 3.1 of \cite{Schnurer2002}, as long as the solution exists, the boundary condition is strictly oblique, i.e.
\begin{equation}
	\left<\nu(x),\nu^*(Du(x,t))\right> >0,\ \ x\in \partial\Omega,
\end{equation}
where $\nu$ and $\nu^*$ denote the inner unit normal of $\Omega$ and $\Omega^*$, respectively.
Moreover, by boundary condition $h(Du)=0$ on $\partial \Omega$ and $|Dh|=1$, we know that the interior unit normal of $\partial\Omega^*$ is given by
$$\nu^*(p)=Dh(p_1,\cdots,p_n)=\left(\frac{\p h}{\p p_1},\cdots, \frac{\p h}{\p p_n}\right).$$
Thus we get a contradiction by the strict obliqueness of of $h^*_{p_k}$.
Therefore, we conclude the following estimate for $\dot{u}^*=- w^* (F^*)^{-1}$:
\begin{prop}
	Let \( u^* \) be a smooth convex solution of the dual flow \eqref{dualeq}. As long as the smooth convex solution  \(u^* \)  exists, we always have
	\[
	\min \left\{ \min_{t=0}(-\dot{u}^*), 0\right\} \leq -\dot{u}^* \leq \max \left\{ \max_{t=0} (-\dot{u}^*), 0 \right\}.
	\]
\end{prop}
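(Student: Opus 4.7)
The plan is to view $\psi := -\dot u^* = w^*(F^*)^{-1}$ as a solution of a linear second-order parabolic equation with no zero-order term, and to apply the parabolic maximum principle, with lateral-boundary extrema ruled out by the parabolic Hopf lemma combined with the strict obliqueness of the second boundary condition (cf.\ Section 6 and \cite{Schnurer2002}).

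First I would record parabolicity. The evolution displayed immediately before the statement reads
\[
\partial_t\psi \;=\; w^* (F^*)^{-2} (F^*)^{ij}\bigl[w^* b^*_{ik}\, D_{kl}\psi\, b^*_{lj}\bigr].
\]
Since $u^*$ stays strictly convex along the flow, $(F^*)^{ij}$ is positive definite, and the symmetric matrices $(b^*_{ik})$ are uniformly positive definite on $\overline{\Omega^*}$, so $\psi$ satisfies $\partial_t\psi = a^{kl}(y,t)\,D_{kl}\psi$ with $(a^{kl})$ positive definite and vanishing zeroth-order part. The weak parabolic maximum principle then forces the extrema of $\psi$ on $\overline{\Omega^*}\times[0,T']$ to be attained on the parabolic boundary. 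An extremum on the initial slice $\{t=0\}$ contributes the $\max_{t=0}(-\dot u^*)$ or $\min_{t=0}(-\dot u^*)$ side of the estimate directly.

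To rule out a lateral maximum, I would assume that $\psi$ attains its maximum at some $(y_0,t_0)$ with $y_0\in\partial\Omega^*$ and $t_0>0$ and that this value strictly exceeds $\max\{\max_{t=0}\psi,\,0\}$. By the strong maximum principle, $y_0$ is then a strict boundary maximum, and the parabolic Hopf lemma forces $\partial_{\nu^*}\psi(y_0,t_0) < 0$, where $\nu^*$ is the inner unit normal to $\partial\Omega^*$. On the other hand, differentiating the dual boundary condition $h(Du^*(y,t))=0$ on $\partial\Omega^*$ in $t$ yields $h_{p_k}(Du^*)\,\partial_k\psi = 0$. At the boundary extremum, tangential derivatives of $\psi$ vanish, so $D\psi(y_0,t_0) = \partial_{\nu^*}\psi(y_0,t_0)\,\nu^*$, and substitution gives
\[
\partial_{\nu^*}\psi(y_0,t_0)\,\langle Dh(Du^*(y_0,t_0)),\,\nu^*\rangle \;=\; 0.
\]
Strict obliqueness $\langle Dh(Du^*),\nu^*\rangle > 0$ then forces $\partial_{\nu^*}\psi(y_0,t_0)=0$, contradicting Hopf. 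The minimum case is identical with signs reversed; letting $T'\nearrow T$ yields the stated two-sided bound.

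The main obstacle will be translating the time-differentiated boundary condition $Dh(Du^*)\cdot D\psi = 0$ into a usable normal-derivative identity. Once one recognizes that at an extremum all tangential derivatives vanish, this collapses to the pairing $\partial_{\nu^*}\psi\cdot\langle Dh,\nu^*\rangle = 0$, and strict obliqueness delivers the contradiction with Hopf. The only external input required is that strict obliqueness persists along the flow, which is exactly the content of Section 6; the appearance of $0$ in the statement reflects comparison with the constant solution $\psi\equiv 0$ of the homogeneous linear equation, needed to keep the bound consistent when the initial $-\dot u^*$ is one-signed.
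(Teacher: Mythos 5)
Your proposal is correct and follows essentially the same route as the paper: derive the linear parabolic equation for $-\dot u^* = w^*(F^*)^{-1}$, push extrema to the parabolic boundary, and exclude lateral extrema by combining the Hopf lemma with the time-differentiated oblique boundary condition $h^*_{p_k}\dot u^*_k=0$ and the strict obliqueness from Section 6. The only cosmetic point is that the defining function in the dual boundary condition is $h^*$ (with $Dh^*=\nu$), not $h$, but this does not affect the argument.
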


\section{Estimates of Strict Obliqueness }
In this section, we quantify the strict obliqueness of the boundary conditions. The proof integrates ideas from \cite{Urbas1997} and \cite{Urbas2001}. We adapt Lemma 4.2 in \cite{HQWW} from elliptic equations to the flow. A brief outline of the proof is provided below.

\begin{prop}
		Suppose $\Omega$, $\Omega^*$ are bounded strictly convex domains with smooth boundary in $\mathbb{R}^{n}$. For a smooth solution of \eqref{dualeq}, we have the strict obliqueness estimate:
	\begin{equation}\label{soe}
		\langle\nu^*(Du(x,t)), \nu(x)\rangle\geq c_0>0,\ x\in\partial \Omega,
	\end{equation}
	where $\nu$ and $\nu^*$ are the inner unit normal of $\Omega$ and $\Omega^*$ and $c_0$ is some constant independent of time.
\end{prop}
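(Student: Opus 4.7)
The plan is to adapt the elliptic proof of Lemma 4.2 in \cite{HQWW} (which follows Urbas \cite{Urbas1997,Urbas2001}) to the parabolic setting by applying a parabolic maximum principle in the dual formulation \eqref{dualeq}. Working with $u^*$ has the advantage that the boundary condition $h^*(Du^*) = 0$ is stationary in time, so only one set of differentiations is needed.

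First, I introduce an obliqueness-type function on a one-sided tubular neighborhood $\mathcal{N}$ of $\partial\Omega^*$ inside $\overline{\Omega^*}$:
$$\omega^*(y, t) := h^*_{p_k}\bigl(Du^*(y, t)\bigr)\, h_{y_k}(y).$$
Because $|Dh| = 1$ on $\partial\Omega^*$ and $|Dh^*| = 1$ on $\partial\Omega$, on the boundary $\partial\Omega^*$ the function $\omega^*$ equals $\langle \nu(Du^*(y, t)),\nu^*(y)\rangle$, i.e.\ the dual version of the obliqueness \eqref{soe}. It therefore suffices to prove a positive lower bound for $\omega^*$ on $\partial\Omega^* \times [0,\infty)$ that is independent of $t$.

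Next, consider the test function
$$\Phi := \log \omega^* + A\, h^*(Du^*) + \frac{B}{2}|y|^2,$$
for positive constants $A,B$ to be chosen, and analyze its infimum on $\overline{\mathcal{N}}\times [0,T]$. If the infimum is attained at $t=0$, the strict obliqueness of $u_0^*$ gives the required lower bound. If it is attained on the part of $\partial\mathcal{N}$ away from $\partial\Omega^*$, then $h^*(Du^*)\ge c>0$ there and enlarging $A$ makes $\Phi$ there as large as we wish. On $\partial\Omega^*$ I differentiate the boundary identity $h^*(Du^*) = 0$ tangentially using the vector field $\mathcal{T}$ constructed in Proposition \ref{propT} and invoke the Hopf lemma to bound $D_{\nu^*}\Phi$ from the appropriate side, propagating the initial obliqueness. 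The remaining case is an interior minimum in $\mathcal{N}\cap\Omega^*$, where $\partial_t\Phi \le 0$, $D\Phi = 0$ and $D^2\Phi \ge 0$. At such a point I apply the linearized dual parabolic operator
$$L := \partial_t - w^{*2} (F^*)^{-2} (F^*)^{ij}\, b^*_{ik}\, D_{kl}\, b^*_{lj}$$
to $\Phi$. The spatial part of $L\Phi$ is the elliptic computation of \cite{HQWW}, which uses the strict concavity of $h^*$, the strict convexity of $u^*$, the concavity of $(F^*)^{-1}$ on admissible matrices, and the relation $Dh^*\cdot Du^* = $ (tangential combinations) on the boundary. The time-derivative contributions $\partial_t\log\omega^* + A\,\partial_t h^*(Du^*)$ are handled through the dual flow $u^*_t = -w^*(F^*)^{-1}$ together with the uniform pointwise bound on $-\dot u^* = w^*(F^*)^{-1}$ from the previous section and the $C^1$ bound of Lemma \ref{C1}. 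A sufficiently large $A$ and then $B$ yields $L\Phi > 0$ whenever $\omega^*(y_0,t_0)$ is smaller than a threshold, contradicting the extremal inequalities; this threshold becomes the desired time-independent constant $c_0$.

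The main obstacle is the bookkeeping in $L\log\omega^*$. One has to exploit a cancellation between the bad terms arising from differentiating $\log h^*_{p_k}(Du^*)h_{y_k}$ twice and the good concavity contribution $A\,(F^*)^{ij}h^*_{p_k p_l}(Du^*) D_{ki}u^*\,D_{lj}u^*$ coming from $A\,h^*(Du^*)$; in the elliptic case this cancellation is achieved in \cite{HQWW} by a careful choice of $A$. The novelty here is showing that the extra parabolic term produced by $\partial_t \omega^*$ is of lower order than the principal spatial term, which relies crucially on the uniform bound on $-\dot u^*$ already established. Once this is in place, the constants $A, B$ can be fixed uniformly in $T$, yielding the time-independent lower bound $c_0$ and hence the strict obliqueness \eqref{soe}.
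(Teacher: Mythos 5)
Your proposal takes a genuinely different route from the paper, and as written it has a gap at the decisive step. The paper follows Urbas's two-factor scheme: it writes the obliqueness as the geometric mean
$\chi=\sqrt{u^{ij}\nu_i\nu_j\cdot u_{kl}h_{p_k}h_{p_l}}$ on $\partial\Omega$, and then bounds each factor from below separately by establishing only a one-sided \emph{normal derivative} bound, $\phi_n(x_0,t_0)\geq -C$, for $\phi=\chi+Ah(Du)$ at the space-time minimum point of $\chi$ (and dually for $\phi^*=\chi^*+A^*h^*(Du^*)$ in $\Omega^*$). That one-sided bound is obtained from $\mathcal{L}\phi\leq C\sum_iG^{ii}$ together with a local barrier on $\Omega_r$ and the maximum principle, and it suffices because at the minimum point the tangential derivative of $\chi$ vanishes, so the normal derivative bound converts into a lower bound on $u_{kl}h_{p_k}h_{p_l}$. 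You instead try to bound the obliqueness function itself from below by a direct maximum principle applied to $\Phi=\log\omega^*+Ah^*(Du^*)+\tfrac{B}{2}|y|^2$ on a collar neighborhood.

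The gap is in the boundary case, which is exactly the case that matters (the obliqueness is a boundary quantity, so its infimum is attained on $\partial\Omega^*\times[0,T]$ unless your interior analysis pushes it there). At a boundary minimum you propose to "invoke the Hopf lemma to bound $D_{\nu^*}\Phi$ from the appropriate side," but the Hopf lemma only yields the sign $D_{\nu^*}\Phi\geq 0$; to reach a contradiction you must compare this with an explicit computation of $D_{\nu^*}\Phi$, which contains $h^*_{p_k}u^*_{k\nu^*}$ and $h^*_{p_kp_l}u^*_{l\nu^*}h_{y_k}/\omega^*$, i.e.\ second normal derivatives of $u^*$ that are not controlled at this stage (the $C^2$ estimates of Section 7 \emph{use} the obliqueness). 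It is precisely the product identity and the vanishing of the tangential derivative of $\chi$ at the minimum that let Urbas convert the normal-derivative inequality into a bound on $u_{kl}h_{p_k}h_{p_l}$; your argument has no substitute for this step. A second, related problem is circularity in the interior case: $\mathcal{L}^*\omega^*/\omega^*$ produces terms of size $C\sum_iF^{*ii}\lambda_i^2/\omega^*$, so the good concavity term $-A\theta\sum_iF^{*ii}\lambda_i^2$ from $Ah^*(Du^*)$ dominates only if $A$ is chosen depending on the lower bound of $\omega^*$ you are trying to prove. Finally, the claim that the parabolic contribution is lower order "by the uniform bound on $-\dot u^*$" is not the right mechanism: $\partial_t\omega^*$ involves $D\dot u^*$, hence third spatial derivatives of $u^*$ through the equation; these must cancel structurally against the third-derivative terms produced by the spatial part of $L$ applied to $\log\omega^*$ (as happens in the paper's computation of $\mathcal{L}\phi$ via $\dot u_l=G^{ij}u_{ijl}+G^su_{sl}$), and this cancellation needs to be exhibited, not inferred from the $C^0$ bound on $\dot u^*$.
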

\begin{proof}
	 Let  $h(Du)$ be the defining function of $\Omega^*$  giving $\nu^* = Dh=(h_{p_1},\cdots,h_{p_n})$. Define
	 $$\chi(x,t)=\langle\nu^*(Du(x,t)),\nu(x)\rangle=h_{p_k}(Du(x,t))\nu_k(x),$$
	 where $h_{p_k}=\frac{\p h}{\p p_k}$ and $\nu=(\nu_1,\cdots,\nu_n)$.
According to \cite{Urbas2001},  we can get $\chi\geq 0$ and
	\begin{eqnarray}\label{a}
		\chi=\sqrt{u^{ij}\nu_i\nu_j(D_{kl}u) h_{p_k}h_{p_l}},  \ \ \text{ on }\ \  \p\Omega.
	\end{eqnarray}
To show that there exists a positive lower bound for $\chi$ for $(x,t)\in\partial\Omega\times [0,T]$, it suffices to prove $u_{kl} h_{p_k}h_{p_l}$ and $u^{ij}\nu_i\nu_j$ have uniform positive lower bounds.
	
	Suppose  $(x_0,t_0)\in\p\Omega\times [0,T]$ is the point where $\chi $ attains its minimum on $\p\Omega$. Define	$$\phi(x,t) = \chi(x,t) + A h(Du(x,t)) =  h_{p_k}(Du(x,t)) \nu_k(x,t) + A h(Du(x,t)),
	$$
	where $A$ is a positive sufficiently large constant to be determined.  Following the argument in \cite{Urbas2001}, to prove that $u_{kl} h_{p_k}h_{p_l}$  has a uniform positive lower bound, it is sufficient to show
	\begin{eqnarray}\label{phi1}
		\phi_n(x_0,t_0)\geq -C,
	\end{eqnarray}
	where $e_n$ is the unit outward normal of $\p\Omega$ at $x_0$.
	
Suppose $h^*(x)$ is the uniformly concave function of $\Omega$,  giving $\nu = Dh^*$. Define $$\chi^*(y,t)=\langle\nu^*(y),\nu(Du^*(y,t))\rangle, $$ then $\chi^*(y,t)=\chi(x,t)$, where $y=Du(x,t)$. Similarly, to prove the lower bound for  $u^{ij}\nu_i\nu_j$, define
$$\phi^*(y,t)=\chi^*(y,t)+A^* h^*(Du^*(y,t)),$$
it suffices to show:
\begin{eqnarray}\label{phi2}
	\phi^*_{n^*}(y_0,t_0)\geq -C,
\end{eqnarray}
where $(y_0,t_0)$ is the minimum point and $e_n^*$ is the unit outward normal of $\p\Omega^*$ at $y_0$.

Therefore, proving \eqref{phi1} and \eqref{phi2} yields the desired strict obliqueness estimate \eqref{soe}.

We first prove \eqref{phi1}. Denote $G(Du,D^2u)$ by:
$$G(Du,D^2u)=wF\left( \frac{1}{w} b^{ik} (D_{kl} u) b^{lj} \right)=w\sigma_k^{\frac{1}{k}}(a_{ij}).$$

For the evolution equation $u_t=G=w F$, we define the operator \( \mathcal{L}\) as:
\begin{equation}
	\mathcal{L} f = -\dot{f} +  \left( G^{ij} D_{ij} f +G^sD_s f \right),
	\end{equation}
for any function $f$, where
$$G^{ij}=\frac{\p G}{\p u_{ij}}=wF^{kl}\frac{\partial a_{kl}}{\partial u_{ij}}=\sum_{p,q}F^{pq}b^{ip}b^{qj},$$
and
\begin{align*}
		G^{s}=&\frac{\partial w}{\partial u_s}F+wF^{kl}\frac{\partial a_{kl}}{\partial u_{s}}=\frac{u_{s}}{w}F+wF^{ij}u_{kl} \frac{\partial}{\partial u_s}\left( \frac{1}{w} b^{ik}  b^{lj} \right)\\
=&\frac{u_{s}}{w}F-\frac{u_s}{w}F^{ij}a_{ij}-2wF^{ij}a_{ik}b^{lj}\frac{\partial b_{kl}}{\partial u_{s}}=-2wF^{ij}a_{ik}b^{lj}\frac{\partial b_{kl}}{\partial u_{s}}.
\end{align*}

Differentiating $\dot{u}=wF=G$ with respect to $x_l$ yields
\[\dot{u}_l = G^{ij} u_{ijl}+  G^{s} u_{sl}.\]
We also have\begin{equation*}
	\phi_s=h_{p_kp_l} u_{sl} \nu_{k}+ h_{p_k} \nu_{ks}+Ah_{p_k}u_{ks}
\end{equation*}
and
\begin{equation*}
	\phi_{ij}=u_{jm}(h_{p_{k}p_{l}p_{m}}\nu_{k}+A h_{p_{l}p_{m}})	+2G^{ij}h_{p_{k}p_{l}}u_{li}\nu_{kj}+G^{ij}h_{p_{k}}\nu_{kij}.
\end{equation*}

Thus we get
	\begin{align*}
	\mathcal{L}\phi=&-\dot{\phi} +  \left( G^{ij} D_{ij} \phi +G^sD_s \phi \right)\\
	=&G^{ij}u_{il}u_{jm}(h_{p_{k}p_{l}p_{m}}\nu_{k}+A h_{p_{l}p_{m}})\\
&\qquad\qquad 	+2G^{ij}h_{p_{k}p_{l}}u_{li}\nu_{kj}+G^{ij}h_{p_{k}}\nu_{kij}+G^{s}h_{p_{k}}\nu_{ks}\\
	\leq& (h_{p_{k}p_{l}p_{m}}\nu_{k}+A h_{p_{l}p_{m}}+\delta_{lm})G^{ij}u_{il}u_{jm} + C \sum_{i} G^{ii} + C \sum_s G^s.
\end{align*}

	
	In Section 5, we have proved an estimate for $w^*(F^*)^{-1} $. Specifically, the estimate \( w^*(F^*)^{-1}  = wF= G \) and bound of $|Du|$ provide an upper bound for $G$ and $\sum_s G^s$.  Since $D^{2}h\leq-\theta I$ for some positive constant $\theta$, we can choose sufficiently large $A$ such that:
	$$h_{p_{k}p_{l}p_{m}}\nu_{k}+A h_{p_{l}p_{m}}+\delta_{lm}<0.$$
Using the fact that \( \sum G^{ii} \geq C>0 \), we derive
\begin{equation}\label{phiineq1}
	\mathcal{L} \phi \leq C \sum_i G^{ii}.
\end{equation}

Next, we shall construct a barrier function and apply the maximum principle to establish the desired lower bound. The argument follows similarly to the proof of Lemma 4.2 in \cite{HQWW}, with the only modification of the auxiliary function.

Without loss of generality, we assume that $x_{0}$ is the origin and the positive $x_{n}$-axis corresponds to the interior normal direction to $\partial\Omega$
at the origin. Suppose near the origin, the boundary $\partial\Omega$ is given by
\begin{equation}\label{e3.12}
	x_{n}=\rho(x')=\frac{1}{2}\sum_{\alpha<n}\kappa^{b}_{\alpha}x^{2}_{\alpha}+O(|x'|^{3}),
\end{equation}
where $\kappa^{b}_{1},\kappa^{b}_{2},\cdots,\kappa^{b}_{n-1}$ are the principal curvatures of $\partial\Omega$  at the origin and $x'=(x_{1},x_{2},\cdots,$ $x_{n-1})$.
Denote a neighborhood of $x_{0}$ in $\Omega$ by
$$\Omega_{r}:=\{x\in\Omega:\rho(x')< x_{n}<\rho(x')+r^{2}, |x'|<r\},$$
where $r$ is a small positive constant to be chosen.

Define a barrier function as follows:
\begin{equation}\label{boundary}
	\varphi(x)=-\rho(x')+ x_{n}+\delta|x'|^{2}-Kx^{2}_{n},
\end{equation}
where $\delta=\frac{1}{6}\min\{\kappa^{b}_{1},\kappa^{b}_{2},\cdots,\kappa^{b}_{n-1}\}$ and $K$ is some large undetermined positive constant. Note that $-\varphi$ is strictly convex when $r$ is sufficiently small. Similar to Lemma 4.2 of \cite{HQWW}, we have
\begin{equation}\label{phiineq1}
	\mathcal{L}\varphi
	\leq -C\sum G^{ii},\,\,x\in\Omega_{r},
\end{equation}
by choosing $K$  sufficiently large and $r$  sufficiently small. Moreover, we have
\[\varphi\geq 0,  \quad\text{on}\quad \partial \Omega_{r}.\]
Now we consider the following auxiliary function
\[
\Phi(x,t) = \phi(x,t) - \phi(x_0, t_0) + B \varphi(x),
\]
where \( B \) is a constant to be determined.

By \eqref{phiineq1}, we get 
\begin{equation*}
	\mathcal{L}(\Phi(x))\leq C(1-B)\sum_i G^{ii}<0
\end{equation*}
for large $B$.

By letting $K$ being large enough we conclude that
\begin{equation}\label{e3.15aa}
	\left\{ \begin{aligned}
		\triangle(\Phi(x,0))&\leq 0,\ \  &&x\in\Omega_{r},\\
		\Phi(x,0))&\geq 0,\ \  &&x\in\partial\Omega_{r}.
	\end{aligned} \right.
\end{equation}
By the maximum principle, we get
\begin{equation}\label{e3.15ab}
	\Phi(x,0)|_{\Omega_{r}}\geq 0.
\end{equation}
Then we can derive
\begin{equation}\label{e3.15aaaa}
	\Phi(x,t)|_{\Omega_{r}\times[0,T)}\geq 0.
\end{equation}
Combining  with $\Phi(x_0,t_0)=0$, we have $\partial_n\Phi(x_0)\geq 0$, which proves \eqref{phi1}.

The proof for the second inequality $\eqref{phi2}$ follows similarly. Here we use the linear operator \( \mathcal{L}^* \):
\[ \mathcal{L}^* f = - \dot{f} + (w^*)^{2} (F^*)^{-2} (F^*)^{ij} b^*_{ik} (D_{kl} f) b^*_{lj},
\] for dual equation $u^*_t=-w^*(F^*)^{-1}$. We refer to \cite{HQWW} for rest of the proof.

Thus, we derived the strict obliqueness estimate \eqref{soe}.

\end{proof}

\section{$C^2$ estimates}

In this section, we derive $C^2$ a priori for $u^*$. We assume the  existence of flow \eqref{dualeq} over the time interval
$[0,T]$. Notably, the estimate will not depend on $T$.

\subsection{Global $C^2$ estimates}

Let  \( v = -X\cdot N \) be the support function  of hypersurface $X(x,t)$,
where $N$ is the unit normal. 
Suppose $\{\textbf{e}_1,\cdots, \textbf{e}_n\}$ is an orthonormal frame on $\mathbb{S}^n_+$. The spherical Hessian is given by:
\[
\Lambda_{ij} = \nabla_{ij} v + v \delta_{ij},
\]
and satisfies the following properties:
\[
\nabla_k\Lambda_{ij} =\nabla_j \Lambda_{ik}
\]
and
\[
\nabla_{jj}\Lambda_{ii} -\nabla_{ii} \Lambda_{jj} = -\Lambda_{jj} + \Lambda_{ii}.
\]
 Let $\tilde{\Omega} = P^{-1}(\Omega^*)$. Then we get the evolution equation for $v$
\begin{equation}
	\left\{
	\begin{aligned}
		&v_t = - (F^*)^{-1} ( \Lambda_{ij} ), \ x\in\tilde{\Omega},\\
		&v(\cdot, 0)=u_0^*x_{n+1}.
	\end{aligned}
	\right.
\end{equation}

Consider the problem
\[
\sup_{\bar{\tilde{\Omega}}\times[0,T]\atop \eta \in T_y(\partial\Omega^*), |\eta|=1} \Lambda_{\eta\eta}(x).
\]
Without loss of generality, we can assume at $(x_0,t_0)$, $\textbf{e}_1$ are the maximum point and direction of the above problem. Moreover, we assume
$\Lambda_{ij}$ is a diagonal matrix at $x_0$.
Consequently, we have
\[
\Lambda_{\eta \eta} \leq \Lambda_{11}(x_0,t_0).
\]

Now, consider the test function
$$\varphi = \Lambda_{11}.$$
We have
\[
\varphi_t = ( \Lambda_{11})_t=(v_{11} + v)_t = (v_t)_{11} + v_t.
\]
The first and  second derivative of $v_t$ are calculated as follows:
\[
(v_t)_1 = (F^*)^{-2}  F^{*ij} \Lambda_{ij1}
\]
and
\begin{align*}
(v_t)_{11}&= (F^*)^{-2} \left( F^* \right)^{ij} \Lambda_{ij11}
+ (F^*)^{-2} (F^*)^{ij,pq} \Lambda_{ij1} \Lambda_{pq1}
- 2 (F^*)^{-3} ((F^*)^{ij} \Lambda_{ij1} )^2\\
&\leq (F^*)^{-2} \left( F^{*ii}\Lambda_{11ii} -  \sum_i F^{*ii} \Lambda_{11} +  F^{*ii} \Lambda_{ii} \right)
- 2 (F^*)^{-3} ( F^{*ii} \Lambda_{ii1})^2.
\end{align*}
Thus, we obtain
\begin{align*}
	\varphi_t &\leq (F^*)^{-2} ( F^{*ii} \varphi_{ii} - \varphi \sum_i F^{*ii} + F^*)\\
&	- 2 (F^*)^{-3} (F^{*ii}  \Lambda_{ii1})^2 - (F^*)^{-1}.
\end{align*}
Therefore,
\begin{equation}\label{phiineq}
	\varphi_t - (F^*)^{-2} F^{*ii} \varphi_{ii}  \leq - \varphi (F^*)^{-2} \sum F^{*ii} .
	\end{equation}
	
If the maximum value of \(\varphi\) is attained at an interior point \((x_0, t_0) \in \tilde{\Omega} \times (0, T]\), then from \eqref{phiineq} implies a contradiction. We get the maximum is achieved on boundary. Hence, we conclude that:
\[
\sup_{\bar{\tilde{\Omega}}\times [0, T]} \Lambda_{\eta\eta} \leq C + \sup_{\partial \tilde{\Omega}\times (0, T_1] \cup \tilde{\Omega} \times \{0\}}\Lambda_{\eta\eta},
\]
Therefore, we obtain
\[
\sup_{\bar{\Omega}^*\times [0, T]} \sum_{k,l}w^* b_{ik}^* D_{kl}^2 u^* b_{jl}^* \leq C + \sup_{\partial \bar{\Omega}^*\times [0, T]} \sum_{k,l}w^* b_{ik}^* D_{kl}^2 u^* b_{lj}^*,
\]
which implies
\begin{equation}\label{supest1}
\sup_{\bar{\Omega}^* \times [0, T]} |D^2 u^*| \leq C \left( 1 + \sup_{\partial \Omega^* \times [0, T]} |D^2 u^*| \right).
\end{equation}

\subsection{Boundary $C^2$ estimates}
We now proceed to estimate the second derivatives of $u^*$ on $\partial\Omega^*\times(0,T]$.

For the dual problem, the boundary condition is given by:
\begin{equation}\label{bc2}
	h^*(Du^*)= 0,\quad x\in \partial\Omega^*,
\end{equation}
where $h^*$ is the defining function of $\Omega$. Let $\beta=(h^*_{p_1}, \ldots, h^*_{p_n})$. Denote $\nu$ as the interior unit normal of $\p\Omega$. Thus, we have $\beta(y)=\nu(Du^*(y))$. By the strictly obliqueness of the boundary condition, it follows that  $\beta \cdot \nu^* > 0$, where $\nu^*$ is the interior unit normal of $\p\Omega^*$.

Consider a fixed point on $\Omega^* \times [0, T]$, any direction at this point can be decomposed as
\[a\tau+b\beta,\]
where $\tau$ is a tangent direction to $\partial \Omega^*$. Here the constants $a$ and $b$ are uniformly bounded due to the strict obliqueness estimates.  Therefore, it suffices to estimate the following second derivatives:
\[u^*_{\tau\beta},\ u^*_{\beta\beta},\ u^*_{\tau\tau}. \]

\noindent $\bullet$ \textbf{Estimate for $u^*_{\tau\beta}$:}

Differentiating the boundary condition $h^*(Du^*)= 0,\quad x\in \partial\Omega^*$ along the tangential direction $\tau$, we have

\begin{equation}\label{estimatetau}
	u^*_{\tau\beta}=h^*_{p_i}u*_{i\tau }=0\quad \text{on}\  \partial\Omega^*.
\end{equation}

\noindent $\bullet$ \textbf{Estimate for $u^*_{\beta\beta}$:}

Denote $H^* = h^*(Du^*)$. Note that $H^*$ is positive in $\Omega^*$ and zero on $\partial\Omega^*$. We compute its derivatives:
\[D_i H^* = h_{p_k}^* D_{ik} u^*\]
and
\[
D_{ij} H^* = h^*_{p_k} D_{ijk} u^* + h_{p_kp_l}^* u^*_{ik} u^*_{jl}.
\]
Recall the linear operator $\mathcal{L^*}$ for dual equation:
\[ \mathcal{L}^* f = - \dot{f} + w^* (F^*)^{-2} (F^*)^{ij}w^* b^*_{ik} (D_{kl} f) b^*_{lj},
\]for any function $f$.

Differentiating the equation $	(u^*)_t = -w^* (F^*)^{-1}$ with respect to \(y_k\), we obtain
\begin{align*}
	\dot{u}^*_{k} &= -\frac{y_k}{w^* }(F^*)^{-1} + w^* (F^*)^{-2} F^{*ij}\frac{y_k}{w^* }b^*_{ip}D_{pq}u^*b_{qj}^*\\
&	+ 2 w^* (F^*)^{-2} w^*\frac{\partial b_{ip}^*}{\partial y_k} D_{pq} u^* b_{qj}^*F^{*ij}+ w^* (F^*)^{-2}F^{*ij} w^* b^*_{ip} D_{pqk} u^* b_{gj}^*.
\end{align*}
Then we have
\[\mathcal{L}^*u_k^*=\frac{y_k}{w^* }(F^*)^{-1} - w^* (F^*)^{-2} F^{*ij}\frac{y_k}{w^* }b^*_{ip}u_{pq}^*b_{qj}^* - 2 w^* (F^*)^{-2}F^{*ij} w^*\frac{\partial b_{ir}^*}{\partial y_k} ( b^*)^{rs} b^*_{s p} u_{pq}^* b_{qj}^*\]
Noting that
\[
\frac{\partial b^*_{ir}}{\partial y_k} = \frac{y_i \delta_{k r} + y_{r} \delta_{i k}}{1 + w^*} - \frac{y_i y_r}{(1 + w^*)^2} \frac{y_k}{w^*}, \quad (b^*)^{ rs} = \delta_{rs} - \frac{y_{r} y_s}{w^* (1 + w^*)}
\]
and $|\sum_{p,q}F^{*ij} w^*  b^*_{s p}u^*_{pq} b_{qj}^*|$ is uniformly bound, we derive that
\begin{equation}
	\left| \mathcal{L}^* u_k^* \right| \leq C,
\end{equation}
by the convexity of $u^*$.

Direct computations yield
\[
\mathcal{L}^* H^* = - \dot{H}^* + w^* (F^*)^{-2} F^{*ij} w^* b^*_{ik} (D_{kl} H^*) b_{lj}^*
\]
and
\[
\dot{H}^* = h_{p_k}\dot{ u}^*_k.
\]
Thus
\[
\mathcal{L}^* H^* = h_{p_k} \mathcal{L}^* u_k^* + \sum h^*_{p_kp_l} F^{*ij} w^* b^*_{ip} u^*_{pk}u^*_{ql} b_{qj}^*.
\]
Utilizing the following identity
\[
\sum_{p,q}F^{*ij} w^* b_{i p}^* u^*_{ pk} u^*_{q l}b_{qj}^*=\sum_{p,q,s,n}F^{*ij} w^* b_{i p}^* u^*_{p s} b_{s t}^* (b^*)^{tk} (b^*)^{lm} b^*_{mn} u^*_{ nq} b_{ qj}^*,
\]
 we can derive
\begin{equation}\label{ukk}
	\left| \sum_{k,l,p,q}h^*_{p_k p_l} F^{*ij} w^* b_{i p}^* u^*_{ pk} u^*_{q l} b_{q j}^* \right| \leq C \sum_i F^{*ii} \lambda_i^2,
\end{equation}
where $\lambda_1,\cdots,\lambda_n$  are eigenvalues of the matrix $(\sum_{k,l}w^* b_{ik}^* u^*_{k l} b_{lj}^*)$, which also corresponds the principle radii of curvature of $M_u$. Thus we obtain
\[
\left| \mathcal{L}^* H^* \right| \leq C \left( 1 + \sum_i F^{*ii} \lambda_i^2 \right).
\]
Since $F^*=(\frac{\sigma_n}{\sigma_{n-k}})^{\frac{1}{k}}$, by \cite{Urbas2001}, for any $\epsilon>0$, we further get
\[
\left| \mathcal{L}^* H^* \right| \leq C \left( 1 + (C(\epsilon) + \epsilon M) \sum_i  F^{*ii} \right),\]
where $C(\epsilon)$ is a constant depending only on $\epsilon$ and
\begin{equation}\label{M}
	M = \sup_{y\in\Omega^*}\{\lambda_1(y),\cdots,\lambda_n(y)\}.
\end{equation}

Since \eqref{dualeq} is invariant under rotations in $\mathbb{R}^n$, we may assume the positive $y_{n}$-axis is the interior normal direction to $\partial\Omega^*$ at $ \hat{y} =( \hat{y} _1, \hat{y} _2,\cdots, \hat{y}_n)$.
Near $y_0$, the boundary $\partial\Omega^*$ is given by
\begin{equation*}
	\bar{y}_n=y_{n}-\hat{y} _n=\rho^*(y')=\frac{1}{2}\sum_{\alpha<n}\kappa^{*}_{\alpha}(y_{\alpha}- \hat{y} _{\alpha})^2+O(|y'|^{3}),
\end{equation*}
where $\kappa^{*}_{1},\kappa^{*}_{2},\cdots,\kappa^{*}_{n-1}$ are the principal curvatures of $\partial\Omega^*$  at $\hat{y}$ and $y'=(y_{1}-\hat{y}_1,y_{2}-\hat{y}_2,\cdots,y_{n-1}-\hat{y}_{n-1})$.
Let $	\Omega^*_{r}$ be a neighborhood of $y_{0}$ in $\Omega^*$, defined as
\begin{equation*}
	\Omega^*_{r}:=\{y\in\Omega^*:\rho^*(y')< \bar{y}_{n}<\rho^*(y')+r^{2}, |y'|<r\},
\end{equation*}
where $r$ is a small positive constant to be chosen. We now construct a barrier function $\varphi^*$ defined as follows:
\begin{equation}\label{boundary*}
	\varphi^*(y)=-\rho^*(y')+ \bar{y}_{n}+\delta^*|y'|^{2}-K^*\bar{y}_{n}^2,
\end{equation}
where $\delta^*=\frac{1}{6}\min\{\kappa^{*}_{1},\kappa^{*}_{2},\cdots,\kappa^{*}_{n-1}\}$ and $K^*$ is some large, undetermined positive constant. The function $\varphi^*$ is designed to satisfy the following properties:
\begin{equation*}
	\mathcal{L^*}\varphi^*
	\leq -C\sum_iF^{*ii}\quad\text{in}\ \Omega^*_{r} \ \ \text{ and }\quad \varphi^*\geq 0 \text{ on } \p\Omega_r^*.
\end{equation*}
For any boundary point $\hat{y}\in\p\Omega^*$, we can define a neighborhood  $\Omega^*_r$ of $\hat{y}$ and barrier function $\varphi^*$.  Next, we construct the following test function:
\[
\Phi = H^* - \frac{B}{2} (C(\epsilon) + \epsilon M) \varphi^* \quad \text{in }\Omega_r^*,
\]
where $B$) is a positive constant.

At $t=0,$
\[
\Phi(y,0) = H^*(y,0) -\frac{B}{2} (C(\epsilon) + \epsilon M) \varphi^*(y) \leq 0.\]
On the boundary \(\partial \Omega^*\), it holds that:
\[\Phi\leq 0.\] Moreover, we compute:
\begin{align*}
	\mathcal{L}^*\Phi &= \mathcal{L}^* H^* - \frac{B}{2} (C(\epsilon) + \epsilon M) \mathcal{L}^* \varphi^*\\
	&\geq -C (C(\epsilon) + \epsilon M) \sum F^{*ii} + B (C(\epsilon) + \epsilon M) \sum F^{*ii} - C \sum F^{*ii}\\
	& \geq 0.
	\end{align*}
If $r$ is  chosen sufficiently small, we have $\Phi\leq 0$ on $\p\Omega_r^*$ and $\Phi(y_0)=0$. It follows that $\Phi$ achieves its maximum value at $\hat{y}$. Consequently, at $\hat{y}$,  we drive
\[
D_{\nu^*} H^* \leq C \left( C(\epsilon) + \epsilon M \right).
\]
Since $\hat{y}$ is arbitrary, combining with \eqref{estimatetau}, we conclude
\begin{equation}\label{nn}
	0 \leq D_{\beta \beta} u^* \leq C(\epsilon) + \epsilon M \quad \text{on} \quad \partial \Omega^*.
\end{equation}

\noindent $\bullet$ \textbf{Estimate of $u^*_{\tau \tau }$:}

In estimating the double tangential derivative $u^*_{\tau \tau }$, we encounter the additional negative terms that cannot be eliminated directly. To address this, we adopt a strategy similar to that in \cite{HQWW} for the elliptic curvature equation. By using the orthogonal invariance of the hypersurfaces, we are able to differentiate along specific tangential vector fields, which allows us to establish the estimate for $u^*_{\tau \tau }$.

Let $\{A_s\}$ be a family of orthogonal matrices in $\mathbb{R}^{n+1}$, where $s\in[0,\epsilon_0]$ and $\epsilon_0$ is a small positive constant. Define
\begin{eqnarray}\label{sigmat}
	\sigma_s(y)=PA_s^{-1}P^{-1}(y),\quad y\in\mathbb{R}^n,
\end{eqnarray}
where $P: \mathbb{S}^n_+\rightarrow \mathbb{R}^n$ is the projection map  and $P^{-1}, A_s^{-1}$ are inverse maps of $P, A_s$ respectively.

Following Lemma 5.2 of \cite{HQWW}, we can prove

\begin{lemma}
	Let
	\[
	u_s^*(y, t) = u^*(\sigma_s(y), t)
	\]
	and
	\begin{equation}\label{us}
			\tilde{u}_s^*(y, t) = \frac{\sqrt{1+ |y|^2}}{\sqrt{1 + |\sigma_s(y)|^2}} u_s^*(y, t).	
	\end{equation}	
	Then, the evolution equation for $ u_s^*(y, t)$ is
\begin{equation}\label{useq}
		\frac{d}{dt} u_s^*(y, t) = -\sqrt{1 + |\sigma_s(y)|^2} \left( F^* \big( w^* b_{ik}^*(\tilde{u}_s^*)_{kl} b_{lj}^* \big) \right)^{-1}.
\end{equation}

	\end{lemma}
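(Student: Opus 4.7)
The plan is to prove \eqref{useq} by exploiting the rotational invariance of the curvature flow $X_t = \sigma_k^{1/k} N$ in $\mathbb{R}^{n+1}$. The orbit map $\sigma_s = P A_s^{-1} P^{-1}$ is designed precisely so that $\tilde u_s^*$ becomes the Legendre transform of the graph function of the rotated hypersurface $A_s(M_u)$; once this geometric identification is established, the evolution equation for $\tilde u_s^*$ is inherited from that for $u^*$, and the evolution equation for $u_s^*$ follows by an elementary algebraic rearrangement.

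First I would identify $\sigma_s$ with the action of $A_s^{-1}$ on the upper hemisphere pulled back to the base. Recall that if $y = Du(x,t)$, the upward unit normal to $M_u$ is $N(y) = (-y,1)/w^*$ and $P^{-1}(y) = N(y)$; hence $\sigma_s(y) = P(A_s^{-1} N(y))$, i.e.\ $A_s^{-1}N(y) = N(\sigma_s(y))$ (taking $\epsilon_0$ small so that we stay in $\mathbb S^n_+$). Next I would use the relation between the support function and the Legendre transform derived in Section 2, namely
\begin{equation*}
u^*(y,t) \;=\; \sqrt{1+|y|^2}\, v(N(y),t),
\end{equation*}
to rewrite $\tilde u_s^*$. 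Substituting $y \mapsto \sigma_s(y)$ in this identity gives
\begin{equation*}
u_s^*(y,t) = u^*(\sigma_s(y),t) = \sqrt{1+|\sigma_s(y)|^2}\, v(N(\sigma_s(y)),t) = \sqrt{1+|\sigma_s(y)|^2}\, v(A_s^{-1} N(y),t),
\end{equation*}
so that
\begin{equation*}
\tilde u_s^*(y,t) \;=\; \sqrt{1+|y|^2}\, v(A_s^{-1}N(y),t).
\end{equation*}
Since $v(A_s^{-1}\,\cdot\,,t)$ is exactly the support function of the rotated hypersurface $A_s(M_u(t))$, this identifies $\tilde u_s^*$ as the Legendre transform (in $y$) of the graph function of $A_s(M_u(t))$.

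Next I would invoke the rotational invariance of the geometric flow: the equation $X_t = \sigma_k^{1/k}(\kappa)N$ is preserved by isometries of $\mathbb{R}^{n+1}$, and in particular by each $A_s$. Hence $A_s(M_u(t))$ solves the same flow, its graph function solves \eqref{uflow}, and consequently its Legendre transform $\tilde u_s^*$ satisfies the dual equation \eqref{dualeq}, i.e.
\begin{equation*}
\frac{d}{dt}\tilde u_s^*(y,t) \;=\; -\sqrt{1+|y|^2}\,\Bigl(F^*\bigl(w^* b^*_{ik}(\tilde u_s^*)_{kl} b^*_{lj}\bigr)\Bigr)^{-1}.
\end{equation*}

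Finally, to obtain \eqref{useq} for $u_s^*$ itself, I would use the defining relation \eqref{us} in the reversed form
\begin{equation*}
u_s^*(y,t) \;=\; \frac{\sqrt{1+|\sigma_s(y)|^2}}{\sqrt{1+|y|^2}}\,\tilde u_s^*(y,t),
\end{equation*}
and differentiate in $t$. Because the conversion factor depends on $y$ alone, it passes through $\partial_t$, and the square root $\sqrt{1+|y|^2}$ in the evolution equation for $\tilde u_s^*$ cancels cleanly against the denominator, yielding exactly the right-hand side of \eqref{useq}. The only genuine subtlety in the argument is the identification of $\tilde u_s^*$ as the Legendre transform of the rotated graph — this is where the peculiar weight $\sqrt{1+|y|^2}/\sqrt{1+|\sigma_s(y)|^2}$ in the definition of $\tilde u_s^*$ is forced on us by the change of reference frame through the Gauss map; once that identification is in hand, the rest is formal.
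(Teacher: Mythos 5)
Your proposal is correct and follows essentially the same route as the paper: both arguments reduce to the rotated support function $v_s=v\circ A_s^{-1}$ (your ``rotational invariance of the flow'' is exactly the paper's computation that $v_s$ satisfies the same support-function evolution), identify $\tilde u_s^*=\sqrt{1+|y|^2}\,v_s(P^{-1}y,t)$ via $u^*=w^*v$ and the identity of Lemma~\ref{lemma1}, and then convert back to $u_s^*$ by the $y$-dependent factor. No gaps.
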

\begin{proof}
	 We set the hypersurface $X$ as a vector depending on $x\in \mathbb{S}^n_+$, where $x$ be the unit normal vector of hypersurface $X$.  The support function $v(x)$, defined as $$v(x,t) = -\langle X(x,t), x \rangle,$$ satisfies the evolution equation:
	  \[
	 \frac{d}{dt} (v(x, t)) = -\big( F^* \big( \Lambda (x, t) \big) \big)^{-1}.
	 \]
	 	
	  Let $X_s = A_s(X)$. The support function $v_s$ of $X_s $ is 	
$$v_s(x) = -\langle X_s(x), x \rangle= v (A_s^{-1} (x), t).$$
Then we have
\begin{align}\label{vseq}
	\frac{d}{dt} (v_s (x, t)) &= \frac{d}{dt} v (A_s^{-1} (x, t)) = -\left( F^* (v_{ij} + v \delta_{ij}) (A_s^{-1} (x), t) \right)^{-1}\\ \nonumber
	&	= -\left( F^* \left( (v_s)_{ij} + v_s \delta_{ij} \right) (x, t) \right)^{-1}.
\end{align}	

By \eqref{us} and $u^*=w^*v$, we get
 \[	\tilde{u}^*_s(y,t)=\sqrt{1+|y|^2} \, v_s \left(P^{-1} y,t\right).
\]
Applying Lemma 2.1, we have
	\[
\sum_{k,l}w^* \, b_{ik}^* (\tilde{u}^*_s)_{kl} \, b_{lj}^* = (v_s)_{ij} + v_s \delta_{ij}.
\]
Therefore, we can obtain the evolution equation \eqref{useq} for $\tilde{u}_s^*(y, t)$ from \eqref{vseq}.

\end{proof}

Next, we take derivatives with respect to the following tangential vector
\[
\mathcal{T} = \sum_{m=1}^n \frac{\partial g_m}{\partial s} \Bigl|_{s=0} \frac{\partial }{\partial y_m},
\]
which is generated by  \(\sigma_s(y) = (g_1(y), \ldots, g_n(y))\).

From Proposition 5.1 in \cite{HQWW}, we have	
\begin{align}\label{newut}
	\left.\frac{d}{ds} \tilde{u}_s^*\right|_{s=0} &=w^*\left.\frac{d}{ds} \frac{u_s^*}{\sqrt{1+|\sigma_s(y)|^2}}\right|_{s=0}=w^*\mathcal{T} \frac{u^*}{w^*},\\	
	\left.\frac{d^2}{ds^2} \tilde{u}_s^*\right|_{s=0} &=w^*\left.\frac{d^2}{ds^2} \frac{u_s^*}{\sqrt{1+|\sigma_t(y)|^2}}\right|_{s=0}=w^*\mathcal{T} ^2\frac{u^*}{w^*}.\nonumber
\end{align}

Taking derivative in \eqref{useq} with respect to $s$ and letting $s=0$, by \eqref{newut}, we obtain
\begin{equation}\label{Td1}
	\frac{d}{dt} \left( w^* \mathcal{T} \frac{u^*}{w^*} \right) = w^* (F^*)^{-2} F^{*ij} w^* b^*_{ik} \left( w^*\mathcal{T}( \frac{u^*}{w^*} )\right)_{kl} b_{lj}^*
\end{equation}
and
\begin{align}\label{Td2}\nonumber
	\frac{d}{dt} \left( w^* \mathcal{T}^2 \frac{u^*}{w^*} \right)& = w^* (F^*)^{-2} F^{*ij} w^* b^*_{ik}
	\left( w^* \mathcal{T}^2  \frac{u^*}{w^*} \right)_{kl} b^*_{lj}- 2 w^* (F^*)^{-3}\left(F^{*ij} w^* b^*_{ik}\left( w^* \mathcal{T} \frac{u^*}{w^*} \right)_{kl} b_{lj}^*\right)^2\\ \nonumber
	&+ w^* (F^*)^{-2} F^{*ij, pq}\left( w^* b^*_{ik} \left(w^*\mathcal{T}  \frac{u^*}{w} \right)_{kl} b_{lj}^*\right)
	\left( w^* b^*_{pr} \left(w^* \mathcal{T} \frac{u^*}{w^*} \right)_{rs} b^*_{sq}\right)\\
	&\leq w^* (F^*)^{-2} F^{*ij} w^* b^*_{ik} (w^* \mathcal{T} \frac{u^*}{w^*})_{kl} b_{lj}^*
	- 2 w^* (F^*)^{-3}\left (F^{*ij} w^* b^*_{ik} \left( w^*\mathcal{T}\frac{u^*}{w^*} \right)_{kl} b_{lj}^*\right)^2.
	\end{align}

 By Proposition 5.2 of \cite{HQWW}, we can construct a vector field \(\mathcal{T}=\sum_m \mathcal{T}_m \frac{\partial}{\parital y_m} \) near any given boundary point \(y_0 \in \partial \Omega^*\), which parallel any given tangential vector at $y_0$, and such that
\[
\mathcal{T} _m(y_0) = \sqrt{1 + |y_0|^2} \xi_m
\]
and
\[
|\mathcal{T} (y)|^2 \leq 1 + |y|^2,
\]
where the equality holds for \( y = y_0 \). Moreover, we have \eqref{Td1} and \eqref{Td2}.

The remaining proof follows Step 3 (double tangential estimate) in \cite{HQWW} directly. We here include the detailed steps for the completeness of the proof.

Let
\begin{equation}\label{M}
	\tilde{M} = \max_{(y,t) \in \partial \Omega^* \times [0,T] \atop \eta \in T_y(\partial\Omega^*), |\eta|=1} (1 + |y|^2) D_{\eta \eta} u^*(y, t).
\end{equation}

Assume the maximum value $\tilde{M}$ is achieved at \((y_0, t_0)\) and along the unit tangential  direction $\xi \in T_{y_0}(\partial \Omega^*)$, i.e.
\[
\tilde{M}=(1+|y_0|^2) D_{\xi\xi}u^*(y_0, t_0).
\]
Let $x_0=P^{-1}(y_0)$ and $e_1$ be the unit vector which has the same direction as $dP^{-1}(\xi)$.
Then we construct a vector field $\mathcal{T}$ around $y_0$ with $\mathcal{T}(y_0)=\sqrt{1+|y_0|^2}\xi$. We rewrite $\mathcal{T}$ as
\[
\mathcal{T} = \tau(\mathcal{T}) + \frac{\langle\nu^*,\mathcal{T}\rangle}{\langle\beta,\nu^*\rangle} \beta \quad \text{on} \quad \partial \Omega^*,
\]
where $\tau(\mathcal{T})$ is the tangential component of $\mathcal{T}$ on the tangential space of $\partial \Omega^*$ and $\nu^*$ is the unit interior normal of $\p\Omega^*$.
Let
\[
\beta = \beta^t + \langle\beta,\nu^*\rangle\nu^*,
\]
where $\beta^t$ is the tangential component of $\beta$ on the tangential space of $\p\Omega^*$.

By $u^*_{\tau\beta}=0$, we get
\begin{equation}\label{B1}
	D_{\mathcal{T}\mathcal{T}} u^* = D_{\tau(\mathcal{T}) \tau(\mathcal{T})} u^* + \frac{\langle\nu^*,\mathcal{T}\rangle^2}{\langle\beta,\nu^*\rangle^2} D_{\beta \beta} u^*.
\end{equation}
Moreover, we have
\[
|\tau(\mathcal{T})|^2 \leq |\mathcal{T}|^2 + C \langle\nu^*, \mathcal{T}\rangle^2 - 2 \langle\nu^*, \mathcal{T}\rangle \frac{\langle\beta^t, \mathcal{T}\rangle}{\langle\beta, \nu^*\rangle}.
\]
i.e.
\[
|\tau(\mathcal{T})|^2 \leq  \left(1+|y|^2\right) \left( |\tilde{\mathcal{T}}|^2 + C \langle\nu^*, \tilde{\mathcal{T}}\rangle^2 - 2 \langle\nu^*, \tilde{\mathcal{T}}\rangle \frac{\langle\beta^t, \tilde{\mathcal{T}}\rangle}{\langle\beta, \nu^*\rangle} \right)
\]
for $\tilde{\mathcal{T}} = \frac{1}{\sqrt{1+|y|^2}}\mathcal{T}$. Setting  $\eta=\tau(\mathcal{T})/|\tau(\mathcal{T})|$ be the unit tangential vector field, we derive
\[
D_{\tau(\mathcal{T})\tau(\mathcal{T})} u^*(y) \leq  \left( |\tilde{\mathcal{T}}|^2 + C \langle\nu^*, \tilde{T}\rangle^2 - 2 \langle\nu^*, \tilde{\mathcal{T}}\rangle \frac{\langle\beta^t, \tilde{\mathcal{T}}\rangle}{\langle\beta, \nu^*\rangle} \right) \left( 1 + |y|^2 \right)  D_{\eta \eta} u^*(y).
\]
Therefore,  using the positive lower bound estimate of $\langle\beta,\nu^*\rangle$, the estimate for $u^*_{\beta\beta}$, and definition of $\tilde{M}$, we obtain
\begin{eqnarray}\label{B2}
\begin{aligned}
	\frac{D_{\mathcal{T}\mathcal{T}} u^*}{\tilde{M}}\leq   |\tilde{\mathcal{T}}|^2&  + C \langle\nu^*, \tilde{\mathcal{T}}\rangle^2 - 2 \langle\nu^*, \tilde{\mathcal{T}}\rangle \frac{\langle\beta^t, \tilde{T}\rangle}{\langle\beta, \nu^*\rangle}\\
 &\qquad\qquad+ \frac{C}{\tilde{M}}(C(\epsilon) +\epsilon M)\langle\nu^*, \tilde{\mathcal{T}}\rangle^2 \quad \text{on}\ \partial \Omega^*.
\end{aligned}
\end{eqnarray}

Define an auxiliary function:
\[
\phi= \frac{D_{\mathcal{T}\mathcal{T}} u^*}{\tilde{M}} + 2 \left< \nu^*, \tilde{T} \right> \frac{\left< \beta^T, \tilde{\mathcal{T}} \right>}{\left< \beta, \nu^* \right>} - A H^*,
\]
where $A$ is a positive constant to de determined.
 At the point $(y_0, t_0)$, since $\mathcal{T}(y_0)=\sqrt{1+|y_0|^2}\xi$ and$H^*=0$ on $\p\Omega^*$, we have
\[
\tilde{\mathcal{T}} = \xi,    \quad \text{and} \quad D_{\mathcal{T}\mathcal{T}} u^* = \tilde{M}
\]
and
\begin{equation}\label{B21}
	\phi(y_0, t_0) = 1.
\end{equation}
Combing \eqref{supest1} \eqref{estimatetau} and the definitions of $M$, $\tilde{M}$, we have
\begin{equation}\label{MM}
	M \leq C \left( C(\epsilon)+\epsilon M + \tilde{M} \right).
\end{equation}
By the same computation in \cite{HQWW}, we can derive the following estimate near $y_0$:
\begin{equation}\label{ww}
\phi \leq 1 + C_1(\epsilon) \left| y - y_0 \right|^2
\end{equation}
for sufficiently small $\epsilon$, where $C_1(\epsilon)$ is a constant  depending only on $\epsilon$.

Next, define a test function
\begin{equation}
	\Psi =\phi - C_1(\epsilon) |y - y_0|^2 - B(C(\epsilon) + \epsilon M) \varphi^*,
	\end{equation}
where $B$ is a positive constant to be determined and $\varphi^*$ is defined in \eqref{boundary*}.

Since $\varphi^*\geq 0$ on $\p\Omega^*_r\cap \p\Omega^*$ for $r$ sufficiently small, using \eqref{ww}, $\Psi$ achieves its maximum value at $(y_0,t_0)$ on $\p\Omega^*_r\cap \p\Omega^*$. Therefore, if $B$ is chosen sufficiently large, we get
\begin{equation}\label{Psi1}
	\Psi\leq 1\quad\text{on}\quad\p\Omega_r^*\cap\Omega^*.
\end{equation}
We also have
\begin{equation}
	 \triangle \Psi(y, 0) \geq 0, \quad \text{in} \quad \Omega^*_r \times \{0\}.
 \end{equation}
On the boundary $y\in \p\Omega^*_r\cap \Omega^*$,  if $y$ is close to $y_0$, we use \eqref{ww} to directly obtain
\begin{equation}\label{Psi3}
		\Psi \leq 1,\quad y\in \p\Omega^*_r\cap \Omega^*.
\end{equation}
And $y$ is away from $y_0$, we choose $C_1(\epsilon)$ sufficiently large to ensure \eqref{Psi3} holds.
Combing \eqref{Psi3} with \eqref{Psi1}, we derive
\begin{equation}\label{Psi2}
	\Psi \leq 1 \quad \text{on} \quad \partial\Omega^*_r \times \{0\}.
\end{equation}
Then, by the maximum principle, we derive
\begin{equation}\label{Psi2}
	\Psi \leq 1 \quad \text{in} \quad \Omega^*_r \times \{0\}.
\end{equation}
We now proceed to calculate and obtain the following estimate:
\[
\mathcal{L}^* (|y - y_0|^2) \leq C \sum_i F^{*ii},\quad \mathcal{L}^* \varphi^* \geq C \sum F^{*ii}.
\]

Let $\zeta(y,p)$ be a smooth function depending on $y$ and $p=Du^*$:
$$\zeta= \zeta (y, p) = 2 \langle\nu^*, \tilde{T}\rangle \frac{\langle\beta^t, \tilde{\mathcal{T}}\rangle}{\langle\beta, \nu^*\rangle}.$$
Then by the definition of $\phi$, we get
\[
\mathcal{L}^* \phi = \frac{1}{\tilde{M}} \mathcal{L}^* D_{\mathcal{T}\mathcal{T}} u^* + \mathcal{L}^*\zeta - A \mathcal{L}^* H^*.
\]
Thus, we get
\begin{align*}
	\mathcal{L}^*( D_{\mathcal{T}\mathcal{T}} u^*) &= \mathcal{L}^* \left( w^* \mathcal{T}^2( \frac{u^*}{w^*} )+ 2 T u^* \frac{\mathcal{T} w^*}{w^*} + u^*  \frac{\mathcal{T}^2w^*}{w^*} -2 u^* \frac{(\mathcal{T}w^*)^2}{(w^*)^2} - D_{\mathcal{T}}\mathcal{T}(u^*)\right).\\
	&\geq \mathcal{L}^* \left(2 \mathcal{T} u^* \frac{\mathcal{T}w^*}{w^*} + u^*  \frac{\mathcal{T}^2w^*}{w^*} - 2 u^* \frac{(\mathcal{T}w^*)^2}{(w^*)^2} - D_{\mathcal{T}}\mathcal{T}(u^*)\right)\\
	&\geq \mathcal{L}^* \left(2 \mathcal{T} u^* \frac{\mathcal{T}w^*}{w^*}  - D_{\mathcal{T}}\mathcal{T}(u^*)\right) - C \sum_i F^{*ii}-C\\
&\geq -C- C \sum_i F^{*ii}.
\end{align*}
Direct calculation yields
\begin{align*}
	\dot{\zeta}&=\sum_i\zeta_{p_i}\dot{u}_i^*,\quad \zeta_{i} = \zeta_{y_i} + \sum_s\zeta_{p_s} u^*_{si},\\
	\zeta_{ij} &= \zeta_{y_iy_j} +\sum_s \zeta_{y_i p_s} u^*_{sj} + \sum_s\zeta_{p_s y_j}u^*_{si}+ \sum_{s,t}\zeta_{p_sp_t} u^*_{si}u^*_{tj}+\sum_s\zeta_{p_s}u^*_{sij},
\end{align*}
Then we have
\begin{align*}
	\mathcal{L}^* \zeta& = -\dot{\zeta} +  w^* (F^*)^{-2} F^{*ij} w^* b^*_{ip} D_{pq}\zeta b^*_{qj}\\
	& = \sum_{i} \zeta_{p_i}\mathcal{L}^* \dot{u}_i^*+  w^* (F^*)^{-2} F^{*ij} w^* b^*_{ip} D_{pq}\zeta b^*_{qj}\\
	&\geq -C \sum_i F^{*ii} -C-C\sum F^{*ij}w^* b_{ik}  u_{sk}^* u^*_{sl}b^*_{lj}.
\end{align*}
Thus, assuming that $A$ and $B$ are sufficiently large, we can obtain
\[
\mathcal{L}^* \Psi \geq 0.
\]
From this we deduce that $\Psi$ achieves its maximum at the point $y_0$ at time $t_0$. Consequently, we have the estimate:
	\[
D_\beta \phi(y_0) \leq C(C(\epsilon) + \epsilon M).
\]
This then implies
\[
D_{\mathcal{T}\mathcal{T}\beta} u^*(y_0) \leq C(C(\epsilon) + \epsilon M) \tilde{M},
\]
which leads to
\begin{equation}\label{LL1}
	D_{\xi\xi \beta} u^*(y_0) \leq C(C(\epsilon) + \epsilon M) \tilde{M},
\end{equation}
by $\mathcal{T}(y_0)=\sqrt{1+|y_0|^2}\xi$.

Now, by applying the boundary condition $h^*(Du^*) = 0$, at $y_0$, we differentiate twice with respect to $\xi$. This gives us the following equation:
\begin{equation}\label{LL2}
	D_{\xi\xi \beta} u^* +  \sum_{k,l}h^*_{p_kp_l} D_{\xi k} u^* D_{\xi l} u^* +II(\xi,\xi)D_{\nu^* \beta}u^*=0,\end{equation}
where $II$ denotes the second fundamental form of the boundary $\p\Omega^*$ with respect to the unit normal vector $\nu^*$.
Next, combining \eqref{LL1} with \eqref{LL2}, we obtain
\[ - h^*_{p_kp_l} D_{\xi k} u^* D_{\xi l} u^* \leq C(C(\epsilon) + \epsilon M) \tilde{M}.
\]
By utilizing the concavity of $h^*$, we derive the following bound on the second derivative of $u^*$:
\[
(D_{\xi\xi} u^*)^2 \leq C \left( C(\epsilon) + \epsilon M \right) \tilde{M}.
\]
From the definition of  $\tilde{M}$ we obtain
\[
\tilde{M} \leq C \left( C(\epsilon) + \epsilon M \right).
\]
Finally, combining this with \eqref{MM} and choosing $\epsilon$ sufficiently small, we conclude that
\[
M\leq C(\epsilon) .
\]
Thus, we conclude that the double tangential derivative $u^*_{\tau\tau}$ is uniformly bounded.

Combing the global $C^2$ estimate and boundary  $C^2$ estimates, we obtain the following $C^2$ estimate:
\begin{lemma}
	For a solution of flow \eqref{dualeq}, the following a priori bound holds for the second derivatives of $u^*$:
	\[
	\sup_{\bar{\Omega^*}\times[0,T]} D^2 u^* \leq C.
	\]
\end{lemma}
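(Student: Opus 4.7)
The plan is to combine the three boundary estimates already developed with the global reduction to obtain a uniform bound on $D^2 u^*$. The global $C^2$ inequality \eqref{supest1} reduces matters to controlling $|D^2 u^*|$ on $\partial\Omega^*\times[0,T]$. Because of the strict obliqueness estimate \eqref{soe}, at each boundary point every direction can be written as a uniformly bounded linear combination $a\tau + b\beta$, where $\tau$ is tangent to $\partial\Omega^*$ and $\beta=Dh^*(Du^*)$ is the oblique field. Hence it suffices to bound the three quantities $u^*_{\tau\beta}$, $u^*_{\beta\beta}$, and $u^*_{\tau\tau}$ on $\partial\Omega^*\times[0,T]$.

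First, $u^*_{\tau\beta}$ vanishes identically on $\partial\Omega^*$ by tangentially differentiating the boundary condition $h^*(Du^*)=0$, as in \eqref{estimatetau}. Second, for $u^*_{\beta\beta}$ the plan is to apply the parabolic operator $\mathcal{L}^*$ to $H^*=h^*(Du^*)$: using $|\mathcal{L}^* u^*_k|\le C$, the concavity of $h^*$, and the key inequality \eqref{ukk} along with the Urbas-type refinement $\sum F^{*ii}\lambda_i^2 \le (C(\epsilon)+\epsilon M)\sum F^{*ii}$, one obtains $|\mathcal{L}^* H^*|\le C(1+(C(\epsilon)+\epsilon M)\sum F^{*ii})$. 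Constructing the barrier $\varphi^*$ as in \eqref{boundary*}, which satisfies $\mathcal{L}^*\varphi^*\le -C\sum F^{*ii}$ on $\Omega^*_r$ and $\varphi^*\ge 0$ on $\partial\Omega^*_r$, and comparing $H^* - \tfrac{B}{2}(C(\epsilon)+\epsilon M)\varphi^*$ via the maximum principle produces the bound $0\le u^*_{\beta\beta}\le C(\epsilon)+\epsilon M$, with $M$ as in \eqref{M}.

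The main obstacle, as anticipated, lies in the double tangential estimate for $u^*_{\tau\tau}$, since a straightforward differentiation of the evolution along a tangential direction produces uncontrollable negative terms. The plan is to overcome this by exploiting the orthogonal invariance of the flow: for a family of rotations $A_s$ in $\mathbb{R}^{n+1}$ we consider $\tilde{u}^*_s$ defined in \eqref{us}, which satisfies \eqref{useq}. Differentiating once and twice in $s$ at $s=0$ yields the evolution identities \eqref{Td1} and \eqref{Td2} for $w^*\mathcal{T}(u^*/w^*)$ and $w^*\mathcal{T}^2(u^*/w^*)$, where $\mathcal{T}$ is the tangential vector field from Proposition \ref{propT}. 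Choosing the maximum point $(y_0,t_0)$ and tangential direction $\xi$ defining $\tilde{M}$ in \eqref{M}, and aligning $\mathcal{T}(y_0)=\sqrt{1+|y_0|^2}\,\xi$, one decomposes $\mathcal{T}=\tau(\mathcal{T})+\langle\nu^*,\mathcal{T}\rangle\langle\beta,\nu^*\rangle^{-1}\beta$, applies $u^*_{\tau\beta}=0$ and the already established $u^*_{\beta\beta}$ bound to arrive at the localized inequality \eqref{ww} for the auxiliary function $\phi$.

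Finally, the test function $\Psi=\phi - C_1(\epsilon)|y-y_0|^2 - B(C(\epsilon)+\epsilon M)\varphi^*$ is designed so that $\Psi\le 1$ on the parabolic boundary $\partial\Omega^*_r\times[0,T]\cup\Omega^*_r\times\{0\}$ and $\mathcal{L}^*\Psi\ge 0$ in the interior, where one uses $|\mathcal{L}^*\zeta|\le C\sum F^{*ii}+C+C\sum F^{*ij}w^*b^*_{ik}u^*_{sk}u^*_{sl}b^*_{lj}$ from the computation of $\mathcal{L}^*$ on the rotational test function and $\mathcal{L}^*(|y-y_0|^2)\le C\sum F^{*ii}$, absorbing the positive quadratic gradient terms into $\mathcal{L}^*\varphi^*\ge C\sum F^{*ii}$ by taking $A,B$ sufficiently large. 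Applying the Hopf principle at $(y_0,t_0)$ yields $D_\beta\phi(y_0)\le C(C(\epsilon)+\epsilon M)$, hence \eqref{LL1}. Differentiating the boundary condition twice along $\xi$ as in \eqref{LL2} and exploiting the uniform concavity of $h^*$ converts this into $(D_{\xi\xi}u^*)^2\le C(C(\epsilon)+\epsilon M)\tilde{M}$, so that $\tilde{M}\le C(C(\epsilon)+\epsilon M)$. Combining with \eqref{MM} and choosing $\epsilon$ small absorbs the $\epsilon M$ on the right, yielding $M\le C(\epsilon)$. The bound on $D^2 u^*$ on $\partial\Omega^*\times[0,T]$ then feeds back into \eqref{supest1} to establish the claimed uniform interior estimate, completing the proof.
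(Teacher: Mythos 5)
Your proposal follows essentially the same route as the paper: reduction to the boundary via the global estimate \eqref{supest1}, the decomposition into $u^*_{\tau\beta}$, $u^*_{\beta\beta}$, $u^*_{\tau\tau}$ using strict obliqueness, the barrier argument with $\varphi^*$ for the double oblique derivative, and the orthogonal-invariance construction with $\mathcal{T}$ and the test function $\Psi$ for the double tangential derivative, closed by combining $\tilde{M}\le C(C(\epsilon)+\epsilon M)$ with \eqref{MM} and taking $\epsilon$ small. The argument is correct and matches the paper's proof in all essential steps.
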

Since $F^*=\left( \frac{\sigma_n}{\sigma_{n-k}} \right)^{\frac{1}{k}}=\left[\frac{\sigma_n}{\sigma_{n-k}}\Big(\la^*\Big[w^*\sum_{k,l} b^*_{ik}(D_{kl}u^*)b^*_{lj}\Big]\Big)\right]^{\frac{1}{k}}$ was proved bounded in Section 5 and by Lemma 7.2, we can deduce a positive lower bound for $F^*$. This leads to the estimate for $F= \sigma_k^{\frac{1}{k}}$, from which we can derive bounds for the principle curvatures. Thus, we obtain the following estimate for $u$:
\begin{lemma}
	For a solution of flow \eqref{uflow}, the following a priori estimate holds for $u$:
	\[
	\sup_{\bar{\Omega}\times[0,T]} D^2 u \leq C.
	\]
\end{lemma}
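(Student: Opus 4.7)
The plan is to transfer the $C^2$ estimate for $u^*$ established in Lemma~7.2 back to the primal variable $u$ through Legendre duality. The key algebraic fact is the identity $F(\kappa)\,F^*(\lambda^*) = 1$, which follows from $\sigma_j(\lambda^*) = \sigma_{n-j}(\kappa)/\sigma_n(\kappa)$ whenever $\lambda_i^* = 1/\kappa_i$: one computes directly that $\sigma_n(\lambda^*)/\sigma_{n-k}(\lambda^*) = 1/\sigma_k(\kappa)$, so $(F^*)^k = 1/\sigma_k(\kappa) = 1/F^k$. Thus a positive lower bound on $F^*$ is equivalent to an upper bound on $F = \sigma_k^{1/k}(\kappa)$, while the principal radii $\lambda^*$, being the eigenvalues of $w^*\sum_{k,l}b^*_{ik} D_{kl}u^*\, b^*_{lj}$, are directly controlled by $D^2u^*$.

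First I would extract a uniform positive lower bound for $F^*$ from Section~5. Since $-\dot u^* = w^*(F^*)^{-1}$ is bounded above by a constant and $w^* = \sqrt{1+|y|^2} \geq 1$ on $\bar\Omega^*$, we get $F^* \geq c_0 > 0$ uniformly on $\bar\Omega^*\times[0,T]$, whence by the duality identity $\sigma_k^{1/k}(\kappa) = 1/F^* \leq C_0$. Next, Lemma~7.2 furnishes $\sup |D^2u^*| \leq C$; combined with the $C^1$ bound (Lemma~4.1) which makes $w^*$ and $b^*_{ij}$ uniformly bounded on $\bar\Omega^*$, this yields an upper bound on the eigenvalues $\lambda_i^*$ of $w^* b^* D^2u^* b^*$, and hence the strictly positive lower bound $\kappa_i = 1/\lambda_i^* \geq c_1 > 0$ for every $i$.

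With $\sigma_k(\kappa) \leq C_0^k$ and $\kappa_i \geq c_1$ in hand, I would invoke the elementary combinatorial inequality
\[
\sigma_k(\kappa) \;\geq\; \kappa_{\max}\,\sigma_{k-1}(\kappa_1,\dots,\widehat{\kappa_{\max}},\dots,\kappa_n) \;\geq\; \tbinom{n-1}{k-1}\,c_1^{\,k-1}\,\kappa_{\max},
\]
obtained by isolating the terms of $\sigma_k$ containing the largest curvature, to conclude $\kappa_{\max} \leq C_2$. Finally, since the principal curvatures $\kappa_i$ are the eigenvalues of $a_{ij} = w^{-1} b^{ik}(D_{kl}u) b^{lj}$ and both $w$ and $b^{ij}$ are bounded away from $0$ and $\infty$ by the $C^1$ bound on $u$, inverting this relation as $D_{kl}u = w\, b_{ki}\,a_{ij}\,b_{jl}$ in matrix form yields the stated bound $\sup_{\bar\Omega\times[0,T]} |D^2u| \leq C$.

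The main substantive step is the duality-plus-Newton-type argument in the third paragraph; everything else is bookkeeping through the Legendre map. I expect no genuine obstacle here, because all of the delicate analytic work --- the maximum principle for $\dot u^*$, the strict obliqueness, the global and boundary $C^2$ estimates for $u^*$ --- has already been discharged in Sections~5--7, and the present lemma is essentially the Legendre-transform closing of that loop. The one place to be careful is to ensure the lower bound $\kappa_i \geq c_1$ is uniform in $t$, which forces the Lemma~7.2 bound to be time-independent; this is already the case, so the argument goes through cleanly.
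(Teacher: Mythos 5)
Your proposal is correct and follows the same route the paper takes for this lemma: the Section 5 bound on $w^*(F^*)^{-1}$ yields $F^*\geq c_0$ and hence $F=1/F^*\leq C$ by the duality identity, Lemma 7.2 gives the upper bound on the $\lambda_i^*$ and thus the positive lower bound on the $\kappa_i$, and the elementary inequality $\sigma_k(\kappa)\geq\kappa_{\max}\sigma_{k-1}$ then bounds $\kappa_{\max}$ and hence $D^2u$. You have in fact supplied more detail than the paper, which states this chain of deductions as a brief sketch without writing out the combinatorial step.
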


\section{Proof of Theorem 1.1}
In this section, we present the proof of Theorem 1.1. We begin by establishing the long-term existence of the flow. Using a standard argument based on the implicit function theorem, we conclude that \eqref{uflow} admits a unique solution for short time. In the previous sections, we have established $C^0$ and uniform $C^2$ estimates for the solution $u$. This enables us to obtain a uniform $C^{2,\alpha}$ estimate. Applying standard parabolic theory, we next establish uniform bounds for higher-order derivatives, thereby ensuring the existence of a smooth solution to \eqref{uflow} for all $t \geq 0$.

Next, we show that the solution $u$ smoothly converges to a translating solution.

Since we have established the existence of the flow and obtained the uniform bounds for the solution, we can apply Theorem 1.1 from \cite{Huang2} for the second boundary value problem of nonlinear parabolic equations to derive the convergence. This convergence theorem guarantees that the solution converges to a solution the corresponding elliptic equation, which describes the translating solution,  provided the solution exists for a sufficiently long time and the uniform boundary estimates hold.  We have derived the existence of a translating solution for \eqref{uflow} in Section 3. The uniform bounds ensure that the solution remains bounded and satisfies the assumptions required by the convergence theorem. Therefore, applying this result, we conclude that the flow \eqref{uflow}  converges to a translating solution. In addition to directly using the convergence theorem from \cite{Huang2}, we provide an independent proof, employing an argument similar to the one in \cite{Schnurer2002}.

 A translating solution takes the form $u(x,t)=u(x)+at$. We denote the translating solution derived in Section 3 by $u^\infty(x,t)$. Define
\[W:=u(x,t)-u^\infty(x,t).\]
By the mean value theorem, there exist a positive definite matrix $a_{ij}$ and a vector field $b_i$ such that $W$ satisfies linear parabolic equation:
\[\dot{W} =a^{ij}W_{ij}+b^iW_i,\quad\text{in } \Omega \times [0, \infty).\]

This completes the proof that any solution of flow \eqref{uflow} exists for all times and converges smoothly to a translating solution.

On the boundary, we have
\begin{align*}
	0&=h(Du(x,t))-h(Du^\infty(x,t))\\
	&=\int_0^1 h_{p_k}(\tau Du+(1-\tau Du^\infty))d\tau \cdot W_k=\beta^k W_k.
\end{align*}
Thus, we obtain the boundary value problem for $W$
\begin{equation}\label{Weq}
	\left\{
	\begin{aligned}
		&\dot{W} =a^{ij}W_{ij}+b^iW_i,  &\text{in } \Omega \times [0, \infty)\\
		&	\beta_kW_k=0, &\text{on } \partial \Omega
	\end{aligned}
	\right.
\end{equation}
By the strong maximum principle, the oscillation of $W$ tends to zero as $t\rightarrow\infty$, implying that $W$ converges to a constant. Consequently, $u$ converges to a translating solution $u^\infty(x,t)$ up a translating as $t \to \infty$. Using interpolation inequalities, we can then establish the smooth convergence of $u$ to the translating solution.


\end{document}